\documentclass{siamart251216}

\usepackage{graphicx} 
\usepackage{amsfonts,amsmath,amssymb}
\usepackage{booktabs,array,multirow}
\usepackage{cancel}
\usepackage[nocompress]{cite}
\usepackage{xcolor}
\usepackage{hyperref,url}
\usepackage{siunitx} 

\newsiamremark{remark}{Remark}
\newsiamremark{example}{Example}
\newsiamthm{assumption}{Assumption}
\newsiamthm{conjecture}{Conjecture}

\DeclareMathOperator{\spn}{span}

\definecolor{brilliantrose}{rgb}{1.0, 0.33, 0.64}
\definecolor{myviolet}{rgb}{0.21, 0.0, 0.85}
\definecolor{amethyst}{rgb}{0.6, 0.4, 0.8}
\definecolor{carrotorange}{rgb}{0.93, 0.57, 0.13}

\newenvironment{lauri}{\begin{quote}\color{brilliantrose} \small\sf Lauri $\clubsuit$~}{\end{quote}}
\newenvironment{stefan}{\begin{quote}\color{amethyst} \small\sf Stefan $\spadesuit$~}{\end{quote}}

\usepackage{tikz, pgfplots, mathtools, tikz-cd}

\pgfplotsset{compat=1.18}

\def\C{\mathbb{C}}
\def\N{\mathbb{N}}

\def\Z{\mathbb{Z}}

\title{Flexible GMRES converges in two phases} 

\author{Stefan G\"{u}ttel\thanks{Department of Mathematics, The University of Manchester, Oxford Road, Manchester, M13\,9PL, United Kingdom, \texttt{stefan.guettel@manchester.ac.uk}}  \and
Lauri Nyman\thanks{Department of Mathematics, The University of Manchester, Oxford Road, Manchester, M13\,9PL, United Kingdom, \texttt{lauri.nyman@manchester.ac.uk} (corresponding author)}}

\begin{document}
\maketitle

\begin{abstract}
    We derive a sharp upper bound on the residuals produced by the flexible GMRES (FGMRES) method. The bound shows that FGMRES exhibits two phases of convergence depending on the residual tolerance of the inner preconditioner. For small tolerances, the convergence of FGMRES is practically geometric with a constant rate throughout, while for looser tolerances the two-phase behavior becomes more pronounced. We also show that the derived bound cannot be improved and construct an example for which it becomes an equality. 
\end{abstract}

\begin{keywords} linear systems;  flexible GMRES; convergence analysis\end{keywords}

\begin{AMS}65F10, 65F08, 15A06\end{AMS}

\pagestyle{myheadings}
\thispagestyle{plain}
\markboth{S.~G\"{U}TTEL AND L.~NYMAN}{FLEXIBLE GMRES}

\section{Introduction}
The flexible GMRES (FGMRES) algorithm~\cite{saad1993flexible} is a widely used iterative solver for large-scale linear systems $A x = b$ allowing for variable preconditioning.
One of the benefits of such a flexible method is that the preconditioner itself may be an iterative method that solves a linear system only approximately. This makes FGMRES particularly attractive in situations where the matrix $A$ is only (easily) accessible through matrix-vector products, including the important class of (Jacobian-free) Newton--Krylov methods for solving nonlinear equations~\cite{knoll2004jacobian}. \mbox{FGMRES} has become a standard tool in scientific computing, discussed in many books and reviews such as~\cite{benzi2002preconditioning,Saad2003,simoncini2007recent} and implemented in high-performance computing libraries such as PETSc~\cite{balay2019petsc} and Trilinos/Belos~\cite{bavier2012amesos2}. 
Some of the scientific problems where FGMRES has been used recently include the chemo-mechanical modeling of solid-state lithium batteries~\cite{liu2024role}, 
solving the pressure Poisson equation in fluids modeling~\cite{ge2007numerical}, the modeling of offshore wind turbines~\cite{li2022onset}, and in materials engineering~\cite{liu2018integrated}, to name just a few. Also recently, FGMRES has seen renewed theoretical interest due to the emergence of randomized variants~\cite{jang2025randomized,guttel2025stabilizing}.

Despite widespread use of the FGMRES algorithm, surprisingly little is known about its convergence. In particular, it is largely unclear how the residual tolerance of the preconditioner affects the convergence of the overall FGMRES method. The aim of this paper is to fill this gap.

To fix notation, we list  in Figure~\ref{fig:fgmres} the  FGMRES algorithm from~\cite{saad1993flexible}, as well as the FFOM method, which extends the FOM method \cite[Sec.~6.4]{saad2011numerical} to variable preconditioning; see also \cite{vuik1995new,guttel2025stabilizing}. The preconditioner $M_j$ in Figure~\ref{fig:fgmres} refers to any linear operator (matrix or black box solver) that approximately solves $A z_j \approx v_j$ at the $j$-th outer FGMRES iteration. Associated with this preconditioner is the residual
\[
    r_j^{\mathrm{P}} := v_j - A z_j.
\]
Throughout the paper we will make the simple assumption that
\begin{equation}\label{eq:assumemu}
    \|r_j^{\mathrm{P}}\| = \|v_j - A z_j\| \leq \mu
\end{equation}
for some $\mu < 1$ independent of $j$, where $z_j := M_j^{-1} v_j$. This is a standard recommendation for using FGMRES; see, e.g., \cite[Sec.~10]{simoncini2007recent}. 

One of our main results is a sharp bound showing that \emph{FGMRES exhibits two phases of convergence} dependent on the parameter~$\mu$.  If $\|r_j^{\mathrm{P}}\| \leq \mu \leq 1/2$ for all iterations~$j=1,\ldots,m$, then
\begin{equation}\label{eq:bndmain}
    \left\|r_m^{\mathrm{FG}}\right\| \leq \left\|r_0\right\| \frac{\mu^{m/2}}{\sqrt{U_m\left(\frac{1}{2\mu}\right)}}.
\end{equation}
Here, $r_j^{\mathrm{FG}} =  b - A x_j^{\mathrm{FG}}$ denotes the residual of the $j$-th FGMRES iterate~$x_j^{\mathrm{FG}}$ and $\|\cdot\|$ is the Euclidean norm. The function $U_m$ is the degree~$m$ Chebyshev polynomial of the second kind. 
Our new bound~\eqref{eq:bndmain} implies that FGMRES is guaranteed to converge without breakdown or stagnation, and it perfectly captures two convergence phases.
\smallskip
\begin{itemize}
    \item Phase~1: Initially, for small $m$, FGMRES  converges at a geometric rate close to $\mu$, the residual tolerance of the preconditioner. That is, $\left\|\mathbf{r}_m^{\mathrm{FG}}\right\| \lessapprox \mu \left\|\mathbf{r}_{m-1}^{\mathrm{FG}}\right\|$.
    \item Phase~2: As $m$ increases, the convergence rate gradually deteriorates toward~$\nu = \sin(\frac{1}{2} \arcsin(2\mu))$. That is, $\left\|\mathbf{r}_m^{\mathrm{FG}}\right\| \lessapprox \nu \left\|\mathbf{r}_{m-1}^{\mathrm{FG}}\right\|$.
\end{itemize}
\smallskip

A Taylor expansion of $\nu$ around $\mu=0$ gives $\nu = \mu + \mu^3/2 + O(\mu^5)$, which shows that for $\mu \ll 1/2$ FGMRES will converge at rate close to $\mu$ throughout. However, when $\mu\approx 1/2$ the rate eventually deteriorates to $\nu \approx \sqrt{\mu}$. See Table~\ref{tab:mu}, and Figure~\ref{fig:fgmres_sharp} for an illustration of this two-phase behaviour.

\begin{table}
\label{tab:mu}
\centering
\caption{The two phases of FGMRES convergence, dependent on the residual tolerance $\mu<1$ of the preconditioner. For $\mu\leq 1/2$, FGMRES first converges geometrically at rate $\mu$ and then deteriorates to rate $\nu = \sin(\frac{1}{2} \arcsin(2\mu))$. For $\mu\approx 0.5$, this deterioration is noticeable while for smaller $\mu$ the rate remains practically constant. For $\mu>1/2$, FGMRES may stagnate after an initial phase of residual reduction.}
\small
\label{table1}
\setlength{\tabcolsep}{10pt}{
\begin{tabular}{l l}             
  $\mu$ (phase 1) & $\nu$ (phase 2) \\
  \hline
  $0.01$ & $0.010000500087521$ \\
  $0.1$ & $0.100508962005208$ \\
$0.2$ & $0.204309643689220$ \\
  $0.3$ & $0.316227766016838$ \\
  $0.4$ & $0.447213595499958$ \\
  $0.5$ & $0.707106781186547 = \sqrt{\mu}$  \\
  \hspace*{-3.45mm}$>0.5$ & $1$ (stagnation)
\end{tabular}
}
\end{table}

Surprisingly, the bound~\eqref{eq:bndmain} cannot be improved in the sense that for a given linear system $Ax = b$ there always exists a sequence of preconditioners~$(M_j)_j$ producing residuals such that $\|r_j^{\mathrm{P}}\| \leq \mu$ and equality holds in~\eqref{eq:bndmain}. Furthermore, we show that one can construct a linear system $Ax = b$ for which equality holds in~\eqref{eq:bndmain} assuming GMRES($k$) as the preconditioner and~$\|r_j^{\mathrm{P}}\| \leq \mu$. (This is also illustrated in Figure~\ref{fig:fgmres_sharp}.)

We also analyze the case $1/2 < \mu < 1$. In this case, FGMRES convergence may stagnate or even break down, after an initial phase of residual reduction. Hence, also in this case, FGMRES convergence is governed by two phases. Our stagnation result is also sharp in the sense that one can construct examples for which stagnation (or breakdown) occurs, and even predict the iteration index of when this happens.

The rest of the paper is organized as follows. In section~\ref{sec:review} we review the existing results on FGMRES convergence and explain the main difficulties in its analysis. Section~\ref{sec:new} derives the new results, including the a priori bound~\eqref{eq:bndmain}. In section~\ref{sec:sharp} we discuss different settings in which the bounds are sharp.
In section~\ref{sec:numex} we provide some numerical demonstrations before we conclude in section~\ref{sec:concl}.

\subsection*{Notation}

Throughout this paper, we adopt the following abbreviations that we use in the superscripts of the residual vector $r_m$ (at step $m$) to refer to the residuals of various Krylov subspace methods:
\smallskip
\begin{itemize}
    \item $r_m^{\mathrm{G}}$: \,\,Residual of the standard GMRES method
    \item $r_m^{\mathrm{F}}$: \,\,Residual of the full orthogonalization method (FOM)
    \item $r_m^{\mathrm{FF}}$: Residual of  flexible FOM (FFOM)
    \item $r_m^{\mathrm{FG}}$: Residual of  flexible GMRES (FGMRES)
    \item $r_m^{\mathrm{P}}$: \,\,Residual of the preconditioner at iteration~$m$ (i.e., $r_m^{\mathrm{P}} = v_m - A z_m$)
\end{itemize}
\smallskip
In addition, we write GMRES($k$) to denote that GMRES runs for exactly $k$ iterations. Moreover, FGMRES-GMRES($k$) denotes FGMRES preconditioned with GMRES($k$).

\begin{figure}[ht]
    \centering
    \begin{minipage}{1\textwidth}
        \hrule \vspace{2mm}
        \textbf{Input:} Matrix $A\in\C^{N\times N}$, vector $b\in\C^N$, initial guess $x_0\in\C^N$, $m\geq 1$ \\[1mm]
        $r_0 := b - Ax_0$; $\beta := \| r_0\|$; $v_1 := r_0/\beta$ \\[1mm]
        For $j=1,\ldots,m$
        \begin{itemize}
            \item[] $z_j := M_j^{-1} v_j$ \hfill \textit{\small (Preconditioner approximately solves $A z_j\approx v_j$)}
            \item[] $w := A z_j$
            \item[] For $i = 1,\dots,j$
                  \begin{itemize}
                      \item[] $h_{i,j} := v_i^* w$; \quad $w :=  w - h_{i,j}v_i$
                  \end{itemize}
            \item[] $h_{j+1,j} := \| w\|$; \quad $v_{j+1} := w / h_{j+1,j}$
        \end{itemize}
        \smallskip
        Define $Z_{m} := [ z_1,z_2,\ldots,z_{m} ]$ and let $\underline{H_m} \in \C^{(m+1)\times m}$ be the upper Hessenberg matrix of coefficients $h_{i,j}$. \\[2mm]
        \textbf{FGMRES:} Compute $x_m^{\mathrm{FG}}:= x_0 + Z_m y_m$, where $y_m$ minimizes $\|\beta e_1 - \underline{H_m} y \|$. That is, the residual norm $\|r_m^{\mathrm{FG}}\| = \| b - A x_m^{\mathrm{FG}} \|$ is minimized over all $x \in x_0 + \spn(Z_m)$. \\[1mm]
        \textbf{FFOM:} Compute $x_m^{\mathrm{FF}} := x_0 + Z_m y_m^{\mathrm{FF}}$, where $y_m^{\mathrm{FF}} = H_m^{-1}\beta e_1$, with $H_m \in \C^{m \times m}$ being the upper square block of $\underline{H_m}$.
        \vspace{2mm} \hrule
    \end{minipage}
    \caption{Pseudocode for the FGMRES and FFOM algorithms.}
    \label{fig:fgmres}
\end{figure}

\section{Existing convergence results for FGMRES}
\label{sec:review}

Analyzing the convergence of flexible GMRES is difficult because the residual loses its standard polynomial structure. In standard GMRES, the search space is a Krylov subspace, which means that the residual at step $m$ can be written as $r_m^{\mathrm{G}} = p(A)r_0$ for some polynomial $p \in \mathcal{P}_m$ with $p(0)=1$, where $\mathcal{P}_m$ denotes the set of polynomials of degree at most $m$.
This polynomial representation is exploited for standard GMRES convergence bounds. For example, Saad and Schultz \cite{SaadSchultz1986} showed in 1986 that if the matrix $A$ is diagonalizable as $A = V \Lambda V^{-1}$, the relative residual is bounded by a polynomial min-max problem:
$$ \frac{\|r_m^{\mathrm{G}}\|}{\|r_0\|} \leq \kappa(V) \min_{\substack{p \in \mathcal{P}_m \\ p(0)=1}} \max_{\lambda \in \sigma(A)} |p(\lambda)|. $$
There also exist geometric convergence bounds, such as Elman's bound \cite{Elman1982} for matrices with a positive definite symmetric part, and its refinements by Beckermann, Goreinov, and Tyrtyshnikov \cite{beckermann2005some}.

In FGMRES, however, the search space is constructed by using variable preconditioners. The residual takes the form
$$ r_m^{\mathrm{FG}} = b - A Z_m y_m, $$
where the columns of $Z_m$ depend on the varying preconditioners $M_j$ applied at each iteration~$j$. Because $Z_m$ does not span a standard Krylov subspace, the standard min-max and geometric bounds for GMRES cannot be applied. With nonpolynomial preconditioners, it may not even be possible to write $r_m^{\mathrm{FG}} = p(A)r_0$. In addition, the following observations highlight the counterintuitive nature of FGMRES convergence. 
\begin{itemize}
    \item The FGMRES-GMRES($k$) residual can be larger than the corresponding residual coming from FGMRES with a fixed polynomial preconditioner of degree $k-1$. This is despite the fact that GMRES($k$) picks the optimal polynomial among those with degree $\leq k-1$, in the sense that $\| r^{\mathrm{P}} \|$ is minimized.
    \item Similarly, the FGMRES-GMRES($k$) residual can be larger than the FGMRES-GMRES($\ell$) residual for $\ell < k$, despite the inner preconditioner performing more algorithmic work to reach a smaller value for $\| r^{\mathrm{P}} \|$.
    \item In both of the above cases, the relative difference between the two residuals can become arbitrarily large.
\end{itemize}
\smallskip

The special case of using FGMRES with a fixed preconditioner $M$ is mathematically equivalent to applying standard GMRES with the matrix $B:=A M$. The most difficult case is indeed analyzing FGMRES convergence with variable preconditioning.

    
Among the works discussing FGMRES theoretically is~\cite{simoncini2002flexible}. 
{This work shows that if one uses GMRES as the preconditioner, the space spanned by the union of all Krylov vectors generated by the preconditioners (denoted $\mathcal{B}_k$ in that paper) keeps growing in dimension. 
Another discussion of FGMRES can be found in \cite{simoncini2003theory}. Therein, the viewpoint is to interpret FGMRES as an inexact Krylov subspace method. We start with a preconditioned system $AMx = b$ involving a fixed preconditioner~$M$. However, instead of ``knowing'' the preconditioner $M$, in FGMRES we apply variable $M_j$ that approximate $M$. If the approximation error $\epsilon$ between $M_j$ and $M$ is sufficiently small at every iteration, then we expect to get essentially the same convergence as for GMRES on the preconditioned system $AM$ plus an error term $O(\epsilon)$. For example, if exact GMRES for $AM x = b$ converges geometrically at rate $\rho$, then in the inexact case we should expect a residual decrease
$$\|r_m\| \le C \rho^m \|r_0\| + O(\epsilon).$$
Here, the contraction factor $\rho$ depends on the spectral properties of the preconditioned matrix $AM$. {While we used a fixed approximation error $\epsilon$ for simplicity, the authors of  \cite{simoncini2003theory} show that the approximation error $\epsilon_j$ at iteration $j$ is allowed to increase as the residual decreases, without negatively affecting the convergence behaviour.}

For the purposes of convergence analysis, interpreting FGMRES as an inexact Krylov method is somewhat restrictive. It requires quantifying how closely the preconditioners $M_j$ relate to $M$, and making sure that this difference is small enough. Additionally, the $O(\epsilon)$ term is rather impractical as it suggests that FGMRES may never be able to reduce the residual to zero, which is not usually the case. Hence, the convergence bounds resulting from inexact Krylov theory are likely of limited utility.

A very simple  bound on the FGMRES residual $r_m^{\mathrm{FG}}$ at iteration $m$ in terms of the FFOM residual at the previous iteration is
\begin{align} \label{eq:FGMRESFFOMbound}
    \big\| r_m^{\mathrm{FG}} \big\| \leq
    \big\| r_{m-1}^{\mathrm{FF}} \big\|\cdot
    \big\| r_{m}^{\mathrm{P}} \big\|.
\end{align}
This a posteriori bound has been derived in \cite{guttel2025stabilizing}, and it is also essentially contained in the proof of Lemma~3 by Vuik~\cite{vuik1995new}. It allows us to predict the FGMRES residual decrease at iteration~$m$ \emph{before} the preconditioner has completed. This is practically useful in the setup considered in~\cite{guttel2025stabilizing}, where it is recommended to run as preconditioner a GMRES variant with randomized sketching for as long as possible, hence making the preconditioner the main expense of the overall FGMRES iteration. We will also build on \eqref{eq:FGMRESFFOMbound} to derive most of our results.

\section{New convergence results}\label{sec:new}

We now derive our main convergence results.

\subsection{A recursive bound on the FGMRES residual}\label{sec:recur}
Our first aim is to extend a well-known equality by Brown \cite{Brown1991} relating the size of the FOM residual to that of the GMRES residual
\begin{align} \label{eq:FOM_GMRES}
    \left\|r_m^{\mathrm{F}}\right\|=\frac{\left\|r_m^{\mathrm{G}}\right\|}{\sqrt{1-\left(\left\|r_m^{\mathrm{G}}\right\| /\left\|r_{m-1}^{\mathrm{G}}\right\|\right)^2}}
\end{align}
to the flexible setting. This is stated formally in Proposition~\ref{prop:ffom_fgmres}. In preparation, we state two basic results in Lemma~\ref{lemma:AZ_orth} and Lemma~\ref{lemma:successive_res}.

\begin{lemma} \label{lemma:AZ_orth}
    For all $m \in \Z^+$, it holds that $r_{m}^{\mathrm{FG}} \perp \spn (A Z_{m})$.
\end{lemma}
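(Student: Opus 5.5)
The claim is the standard Galerkin/normal-equations characterization of a least-squares minimizer, so I will argue directly from the minimization property of FGMRES in Figure~\ref{fig:fgmres}. By construction, $x_m^{\mathrm{FG}}$ minimizes $\|b - Ax\|$ over the affine space $x_0 + \spn(Z_m)$. Writing $x = x_0 + Z_m y$ with $y \in \C^m$, this is equivalent to minimizing
\[
  f(y) := \|r_0 - A Z_m y\|^2 .
\]
Hence $r_m^{\mathrm{FG}} = r_0 - A Z_m y_m$, where $y_m$ is a minimizer of the linear least-squares problem with matrix $AZ_m$ and right-hand side $r_0$.

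The key step is the first-order optimality condition. For any direction $d \in \C^m$ and any $t \in \C$, expand
\[
  f(y_m + t d) = \|r_m^{\mathrm{FG}}\|^2 - 2\,\mathrm{Re}\bigl(\bar t\, (AZ_m d)^* r_m^{\mathrm{FG}}\bigr) + |t|^2 \|AZ_m d\|^2 .
\]
Suppose $(AZ_m d)^* r_m^{\mathrm{FG}} \neq 0$. Choosing $t$ to be a small positive multiple of $(AZ_m d)^* r_m^{\mathrm{FG}}$ makes the linear term dominate the quadratic one, so $f(y_m + t d) < f(y_m)$. This contradicts minimality. Therefore $(AZ_m d)^* r_m^{\mathrm{FG}} = 0$ for every $d$, which means $r_m^{\mathrm{FG}} \perp \spn(AZ_m)$.

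An equivalent route is to observe that $AZ_m y_m$ is the best approximation to $r_0$ from the subspace $\spn(AZ_m)$. The residual $r_0 - AZ_m y_m$ of an orthogonal projection onto a subspace is orthogonal to that subspace, by the projection theorem in finite-dimensional inner product spaces.

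Two details need a check. First, the argument does not need $AZ_m$ to have full column rank: even if $y_m$ is not unique, any minimizer yields the same orthogonality. Second, one might worry that FGMRES actually minimizes $\|\beta e_1 - \underline{H_m}y\|$ rather than $\|b - Ax\|$ itself. These agree via the flexible Arnoldi relation $AZ_m = V_{m+1}\underline{H_m}$ with $V_{m+1}$ having orthonormal columns; the algorithm box already asserts this equivalence, so I may take it as given. I do not expect a real obstacle here. The only mildly delicate point is handling the complex inner product correctly, namely the real-part expansion above.
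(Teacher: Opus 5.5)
Your proof is correct and takes essentially the same approach as the paper, which simply invokes the stationarity condition $\nabla_y \|b - AZ_m y\|^2 = 0$ of the least-squares problem. Your expansion along a direction with complex $t$ is a careful, derivative-free version of that first-order condition, and it avoids the slight informality of taking a gradient with respect to a complex variable $y$.
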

\begin{proof}
    Note that
    \begin{align*}
         & r_{m}^{\mathrm{FG}} := b - A Z_m y_m,            \\
         & y_m := \arg\min_{y \in \C^m} \|b - A Z_m y \|^2.
    \end{align*}
    The stationary condition $\nabla_y \|b - A Z_m y \|^2 = 0$ implies the result.
\end{proof}
\begin{lemma} \label{lemma:successive_res}
    For all $m \in \Z^+$, it holds that $\left(r_m^{\mathrm{FG}}\right)^* r_{m-1}^{\mathrm{FG}} = \left\|r_m^{\mathrm{FG}}\right\|^2.$
\end{lemma}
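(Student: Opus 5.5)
The plan is to write the difference of successive residuals as an element of $\spn(AZ_m)$ and then apply Lemma~\ref{lemma:AZ_orth}. We have $r_m^{\mathrm{FG}} = b - Ax_0 - AZ_m y_m = r_0 - AZ_m y_m$. Likewise $r_{m-1}^{\mathrm{FG}} = r_0 - AZ_{m-1} y_{m-1}$. Because $Z_{m-1}$ consists of the first $m-1$ columns of $Z_m$, we can pad with a zero and write $AZ_{m-1}y_{m-1} = AZ_m \tilde y_{m-1}$, where $\tilde y_{m-1} := [y_{m-1}^T, 0]^T$. This gives
\[
r_{m-1}^{\mathrm{FG}} - r_m^{\mathrm{FG}} = AZ_m\,(y_m - \tilde y_{m-1}) \in \spn(AZ_m).
\]
For $m=1$ we interpret $r_0^{\mathrm{FG}} = r_0$ and $y_0$ as empty, and the same identity holds.

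Next, Lemma~\ref{lemma:AZ_orth} gives $r_m^{\mathrm{FG}} \perp \spn(AZ_m)$, so $(r_m^{\mathrm{FG}})^*(r_{m-1}^{\mathrm{FG}} - r_m^{\mathrm{FG}}) = 0$. Rearranging yields $(r_m^{\mathrm{FG}})^* r_{m-1}^{\mathrm{FG}} = (r_m^{\mathrm{FG}})^* r_m^{\mathrm{FG}} = \|r_m^{\mathrm{FG}}\|^2$, which is the claim.

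There is no real obstacle. The one point needing care is the nesting $\spn(AZ_{m-1}) \subseteq \spn(AZ_m)$, which holds because FGMRES only appends columns $z_j$ and never alters earlier ones. This nesting remains valid even though $Z_m$ does not span a Krylov subspace.
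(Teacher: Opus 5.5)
Your proposal is correct and follows the paper's proof exactly: $r_{m-1}^{\mathrm{FG}} - r_m^{\mathrm{FG}} \in \spn(AZ_m)$, and Lemma~\ref{lemma:AZ_orth} then gives $(r_m^{\mathrm{FG}})^*(r_{m-1}^{\mathrm{FG}} - r_m^{\mathrm{FG}}) = 0$. Your zero-padding of $y_{m-1}$ and your separate handling of $m=1$ just make explicit what the paper leaves implicit.
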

\begin{proof}
    As $r_{m-1}^{\mathrm{FG}} - r_m^{\mathrm{FG}} \in \spn (A Z_{m})$, Lemma \ref{lemma:AZ_orth} implies that $$\left(r_m^{\mathrm{FG}}\right)^* \left(r_{m-1}^{\mathrm{FG}} - r_m^{\mathrm{FG}}\right) = 0,$$ which in turn implies the result.
\end{proof}
\begin{proposition} \label{prop:ffom_fgmres}
    For all $m \in \Z^+$ for which $r_m^{\mathrm{FF}}$ is defined (i.e., $H_m$ is invertible), it holds that
    \begin{align} \label{eq:FFOM_FGMRES}
        \left\|r_m^{\mathrm{FF}}\right\| = \frac{\left\|r_m^{\mathrm{FG}}\right\|}{\sqrt{1-\left(\left\|r_m^{\mathrm{FG}}\right\| /\left\|r_{m-1}^{\mathrm{FG}}\right\|\right)^2}}.
    \end{align}
\end{proposition}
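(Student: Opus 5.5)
Our plan is to mimic Brown's geometric argument for \eqref{eq:FOM_GMRES}, using that FFOM and FGMRES both take their iterates from the same affine space $x_0+\spn(Z_m)$. Their residuals therefore lie in $r_0 + \spn(AZ_m)$, and by the flexible Arnoldi relation $AZ_m = V_{m+1}\underline{H_m}$ this is a subset of $\spn(V_{m+1})$. We characterize the two residuals by orthogonality conditions. The FGMRES residual satisfies $r_m^{\mathrm{FG}}\perp \spn(AZ_m)$ by Lemma~\ref{lemma:AZ_orth}. For the FFOM residual, the relation $AZ_m = V_m H_m + h_{m+1,m}v_{m+1}e_m^*$ gives $r_m^{\mathrm{FF}} = \beta v_1 - V_mH_m y_m^{\mathrm{FF}} - h_{m+1,m}v_{m+1}e_m^* y_m^{\mathrm{FF}} = -h_{m+1,m}(e_m^*y_m^{\mathrm{FF}})\,v_{m+1}$. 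Hence $r_m^{\mathrm{FF}}\perp \spn(V_m)$, and it is a multiple of $v_{m+1}$.

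The key step is to place everything inside the two-dimensional picture spanned by $r_{m-1}^{\mathrm{FG}}$ and $v_{m+1}$. Since $r_{m-1}^{\mathrm{FG}} \in \spn(V_m)$, we have $v_{m+1}\perp r_{m-1}^{\mathrm{FG}}$. We claim that $r_m^{\mathrm{FG}}$ and $r_m^{\mathrm{FF}}$ both lie in the affine line $r_{m-1}^{\mathrm{FG}} + \spn(w)$, where $w$ is the component of $Az_m$ orthogonal to $\spn(AZ_{m-1})$. For FGMRES this follows from the standard nested least-squares update: $r_m^{\mathrm{FG}}$ is the orthogonal projection of $r_{m-1}^{\mathrm{FG}}$ onto the complement of $\spn(w)$. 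For FFOM we would instead argue directly in $\spn(V_{m+1})$. Write $\spn(AZ_m)$ as a subspace of $\spn(V_{m+1})$ of dimension $m$, assuming no breakdown. Its orthogonal complement within $\spn(V_{m+1})$ is one-dimensional, spanned by some unit vector $q$, and $r_m^{\mathrm{FG}}$ is the projection of $r_0$ onto $\spn(q)$. The FFOM residual is the unique point of $r_0+\spn(AZ_m)$ lying in $\spn(v_{m+1})$; this uses the invertibility of $H_m$.

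With these pieces, we compute norms as in Brown's proof. Let $\theta$ be the angle between $q$ and $v_{m+1}$. Then $r_m^{\mathrm{FG}} = (q^*r_0)q$, and $r_m^{\mathrm{FF}} = c\,v_{m+1}$ with $r_m^{\mathrm{FF}} - r_m^{\mathrm{FG}}\perp q$. This forces $q^* r_m^{\mathrm{FF}} = q^*r_0$, so $\|r_m^{\mathrm{FF}}\| = \|r_m^{\mathrm{FG}}\|/|q^*v_{m+1}| = \|r_m^{\mathrm{FG}}\|/\cos\theta$. It remains to identify $\sin\theta = \|r_m^{\mathrm{FG}}\|/\|r_{m-1}^{\mathrm{FG}}\|$. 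Since $r_{m-1}^{\mathrm{FG}}\in\spn(V_m)$, it lies in $v_{m+1}^\perp\cap\spn(V_{m+1})$. Also, $r_{m-1}^{\mathrm{FG}}\perp \spn(AZ_{m-1})$, and $\spn(AZ_{m-1})\subseteq \spn(V_m)$ has codimension one there. So $r_{m-1}^{\mathrm{FG}}/\|r_{m-1}^{\mathrm{FG}}\| =: u$ is the unit vector in $\spn(V_m)$ orthogonal to $AZ_{m-1}$. The three vectors $q,u,v_{m+1}$ all lie in the two-dimensional orthogonal complement of $\spn(AZ_{m-1})$ within $\spn(V_{m+1})$, and $u\perp v_{m+1}$. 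Hence $|q^*u| = \sin\theta$. Finally, Lemma~\ref{lemma:successive_res} gives $\|r_m^{\mathrm{FG}}\|^2 = (r_m^{\mathrm{FG}})^*r_{m-1}^{\mathrm{FG}}$, which equals $(q^*r_0)^{*}(q^*u)\|r_{m-1}^{\mathrm{FG}}\|$. Taking moduli yields $\|r_m^{\mathrm{FG}}\| = |q^*u|\,\|r_{m-1}^{\mathrm{FG}}\|$, and therefore $\cos\theta = \sqrt{1-(\|r_m^{\mathrm{FG}}\|/\|r_{m-1}^{\mathrm{FG}}\|)^2}$, which is \eqref{eq:FFOM_FGMRES}.

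The main obstacle is the dimension bookkeeping in the previous step. We must justify that $\spn(AZ_{m-1})$ has dimension $m-1$ and $\spn(AZ_m)$ has dimension $m$, which holds when $\underline{H_m}$ has full column rank. We would argue that invertibility of $H_m$ implies this, and that $h_{j+1,j}\neq 0$ for $j<m$ is implicit because the vectors $v_j$ are defined. The degenerate case $h_{m+1,m}=0$ must be handled separately: there $r_m^{\mathrm{FF}}=r_m^{\mathrm{FG}}=0$ and both sides of \eqref{eq:FFOM_FGMRES} vanish. We must also check that $r_{m-1}^{\mathrm{FG}}\neq 0$ so that the ratio makes sense. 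A zero previous residual would give a lucky termination, which we would exclude or treat as a trivial case.
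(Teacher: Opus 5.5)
Your proof is correct, but it follows Brown's angle argument rather than the paper's construction. You work in the two-dimensional orthogonal complement of $\spn(AZ_{m-1})$ inside $\spn(V_{m+1})$. There you use the orthonormal basis $u, v_{m+1}$ and the unit normal $q$ to $\spn(AZ_m)$, and read off $\|r_m^{\mathrm{FF}}\| = \|r_m^{\mathrm{FG}}\|/|q^*v_{m+1}|$ and $|q^*u| = \|r_m^{\mathrm{FG}}\|/\|r_{m-1}^{\mathrm{FG}}\|$.

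The paper instead writes down $r_* = \alpha_0 r_m^{\mathrm{FG}} + (1-\alpha_0) r_{m-1}^{\mathrm{FG}}$. This vector is orthogonal to $\spn(AZ_{m-1})$ for every $\alpha_0$ by Lemma~\ref{lemma:AZ_orth}. The paper chooses $\alpha_0$ so that $r_* \perp r_{m-1}^{\mathrm{FG}}$, concludes $r_* \perp \spn(V_m)$ so that $r_*$ is the FFOM residual, and computes $\|r_*\|$ algebraically via Lemma~\ref{lemma:successive_res}. So the paper proves exactly the ``affine line'' claim for FFOM that you state in your second paragraph and then abandon. That claim is true, since $r_m^{\mathrm{FF}} - r_{m-1}^{\mathrm{FG}} \in \spn(AZ_m)$ is orthogonal to $\spn(AZ_{m-1})$ and hence lies in $\spn(w)$. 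Your third paragraph never uses it, so you can drop that part.

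The paper's route is shorter and stays at the level of residual vectors. It also yields the structural fact that the FFOM residual lies on the line through two consecutive FGMRES residuals. The existence condition $\|r_m^{\mathrm{FG}}\| < \|r_{m-1}^{\mathrm{FG}}\|$ appears there as the nonvanishing of the denominator of $\alpha_0$. It does, however, leave implicit the dimension count behind $\spn([AZ_{m-1}\ r_{m-1}^{\mathrm{FG}}]) = \spn(V_m)$. Your route makes that bookkeeping explicit through $AZ_m = V_{m+1}\underline{H_m}$ and treats $h_{m+1,m}=0$ separately. It also identifies the square root as the cosine of the angle between $q$ and $v_{m+1}$. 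This makes clear why three conditions are equivalent: invertibility of $H_m$, $q^*v_{m+1}\neq 0$, and strict decrease of the FGMRES residual.

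A few small points to tighten:
\begin{itemize}
\item Where you divide by $|q^*v_{m+1}|$, say explicitly that it is nonzero because $H_m$ is invertible. If $q^*v_{m+1}=0$, then $q\in\spn(V_m)$, and $0 = q^*AZ_m = q^*V_mH_m$ forces $q=0$.
\item Rather than dividing by $\|r_m^{\mathrm{FG}}\|$ after invoking Lemma~\ref{lemma:successive_res}, note that $q^*r_{m-1}^{\mathrm{FG}} = q^*r_0$, because $r_{m-1}^{\mathrm{FG}} - r_0 \in \spn(AZ_{m-1}) \perp q$. This gives $\|r_m^{\mathrm{FG}}\| = |q^*u|\,\|r_{m-1}^{\mathrm{FG}}\|$ directly, including when $r_m^{\mathrm{FG}}=0$.
\item The case $r_{m-1}^{\mathrm{FG}}\neq 0$ needs no separate treatment. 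Since $v_m$ is defined, all subdiagonal entries of $\underline{H_{m-1}}$ are nonzero, so $\beta e_1$ is not in its range.
\end{itemize}
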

\begin{proof}
    The FFOM solution $x_m := Z_m y_m$ is defined such that
    \begin{align*}
        r_m^{\mathrm{FF}} := b - A x_m \in \spn\{v_{m+1}\}.
    \end{align*}
    For all $\alpha$, we have $b - A\left(\alpha x_{m}^{\mathrm{FG}} + (1-\alpha) x_{m-1}^{\mathrm{FG}}\right) = \alpha r_{m}^{\mathrm{FG}} + (1-\alpha) r_{m-1}^{\mathrm{FG}} \perp \spn (A Z_{m-1})$, where the orthogonality property follows from Lemma \ref{lemma:AZ_orth}. For the specific choice of $\alpha_0 = -\frac{\|r_{m-1}^{\mathrm{FG}}\|^2}{ (r_{m-1}^{\mathrm{FG}})^*(r_{m}^{\mathrm{FG}} - r_{m-1}^{\mathrm{FG}})}$, a direct calculation shows that
    $r_* := \alpha_0 r_{m}^{\mathrm{FG}} + (1-\alpha_0) r_{m-1}^{\mathrm{FG}} \perp r_{m-1}^{\mathrm{FG}}.$ Hence,
    \begin{align*}
        r_* \perp \spn ([A Z_{m-1} \ r_{m-1}^{\mathrm{FG}}]) = \spn (V_m),
    \end{align*}
    and so $r_* \in \spn (v_{m+1})$, that is, $r_*$ is the FFOM residual. Upon noting that $(r_m^{\mathrm{FG}})^* r_{m-1}^{\mathrm{FG}} = \|r_m^{\mathrm{FG}}\|^2$ as per Lemma \ref{lemma:successive_res}, a straightforward algebraic manipulation yields
    \begin{align*}
        \|r_*\| = \frac{\left\|r_m^{\mathrm{FG}}\right\|}{\sqrt{1-\left(\left\|r_m^{\mathrm{FG}}\right\| /\left\|r_{m-1}^{\mathrm{FG}}\right\|\right)^2}}.
    \end{align*}
    The proof assumes that $\left\|r_{m}^{\mathrm{FG}}\right\| < \left\|r_{m-1}^{\mathrm{FG}}\right\|$, which is an equivalent condition for the existence of $r_m^{\mathrm{FF}}$.
\end{proof}

We now combine Proposition \ref{prop:ffom_fgmres} with the bound~\eqref{eq:FGMRESFFOMbound} to derive the following recursive bound on the FGMRES residual.

\begin{proposition} \label{prop:recurrence_gamma}
    It holds that
    \begin{align}
        \begin{aligned} \label{eq:successive_rG_gamma}
             & \left\|r_m^{\mathrm{FG}}\right\| \leq \gamma_m \left\|r_{m-1}^{\mathrm{FG}}\right\|, \\
             & \gamma_m := \frac{\|r_m^{\mathrm{P}}\|}{\sqrt{1-\gamma_{m-1}^2}}, \quad \gamma_0 := 0,
        \end{aligned}
    \end{align}
    for all $m \in \Z^+$ for which $\max_{j \leq m} \gamma_{j}
        < 1$.
\end{proposition}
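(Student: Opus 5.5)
We argue by induction on $m$, using the a posteriori bound~\eqref{eq:FGMRESFFOMbound} together with Proposition~\ref{prop:ffom_fgmres}. Throughout, we write $\rho_j := \|r_j^{\mathrm{FG}}\|/\|r_{j-1}^{\mathrm{FG}}\|$ for the actual contraction factor at step $j$. The inductive claim is $\rho_j \le \gamma_j$ for all $j\le m$, under the standing hypothesis $\max_{j\le m}\gamma_j<1$.

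\textbf{Base case} $m=1$. Since $r_0^{\mathrm{FF}} = r_0 = r_0^{\mathrm{FG}}$, the bound~\eqref{eq:FGMRESFFOMbound} gives
\[
\|r_1^{\mathrm{FG}}\| \le \|r_1^{\mathrm{P}}\|\,\|r_0^{\mathrm{FG}}\|.
\]
Because $\gamma_0=0$, this is exactly $\|r_1^{\mathrm{FG}}\| \le \gamma_1\|r_0^{\mathrm{FG}}\|$.

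\textbf{Inductive step.} Assume $\rho_{m-1}\le\gamma_{m-1}<1$. Then $\|r_{m-1}^{\mathrm{FG}}\|<\|r_{m-2}^{\mathrm{FG}}\|$, so $H_{m-1}$ is invertible, $r_{m-1}^{\mathrm{FF}}$ exists, and Proposition~\ref{prop:ffom_fgmres} applies at index $m-1$:
\[
\|r_{m-1}^{\mathrm{FF}}\| = \frac{\|r_{m-1}^{\mathrm{FG}}\|}{\sqrt{1-\rho_{m-1}^2}} \le \frac{\|r_{m-1}^{\mathrm{FG}}\|}{\sqrt{1-\gamma_{m-1}^2}}.
\]
The inequality holds because $t\mapsto (1-t^2)^{-1/2}$ is increasing on $[0,1)$. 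Inserting this into~\eqref{eq:FGMRESFFOMbound} at index $m$ gives
\[
\|r_m^{\mathrm{FG}}\| \le \|r_m^{\mathrm{P}}\|\,\|r_{m-1}^{\mathrm{FF}}\| \le \frac{\|r_m^{\mathrm{P}}\|}{\sqrt{1-\gamma_{m-1}^2}}\,\|r_{m-1}^{\mathrm{FG}}\| = \gamma_m\|r_{m-1}^{\mathrm{FG}}\|,
\]
which completes the induction.

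\textbf{Degenerate cases.} The main point needing care is what happens when some residual vanishes. If $r_{j}^{\mathrm{FG}}=0$ for some $j<m$, then every later residual is also zero, since the minimization space is nested. The claimed inequality then holds trivially, and $\rho_j$ should be read as $0$ whenever a denominator vanishes. We also need that $\gamma_{m-1}<1$ really gives the strict decrease $\|r_{m-1}^{\mathrm{FG}}\|<\|r_{m-2}^{\mathrm{FG}}\|$ required by Proposition~\ref{prop:ffom_fgmres}. The inductive hypothesis supplies this whenever $\|r_{m-2}^{\mathrm{FG}}\|>0$. Finally, we rely on~\eqref{eq:FGMRESFFOMbound} being valid exactly when $r_{m-1}^{\mathrm{FF}}$ is defined, which is the setting in which it was derived in~\cite{guttel2025stabilizing}.
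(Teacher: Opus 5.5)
Your proof is correct and follows the paper's argument. Both run an induction on $m$ in which Proposition~\ref{prop:ffom_fgmres}, together with the inductive bound $\rho_{m-1}\le\gamma_{m-1}<1$, gives $\|r_{m-1}^{\mathrm{FF}}\|\le\|r_{m-1}^{\mathrm{FG}}\|/\sqrt{1-\gamma_{m-1}^2}$, which is then inserted into \eqref{eq:FGMRESFFOMbound}; your remarks on the monotonicity of $(1-t^2)^{-1/2}$, on existence of the FFOM iterate from strict residual decrease, and on the zero-residual case spell out steps the paper leaves implicit.
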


\begin{proof}
    We prove the result by induction. In the base case $m=1$, we have $\gamma_1 = \|r_1^{\mathrm{P}}\|$ and $v_1 = r_0 / \left\|r_0\right\|$. Clearly, $\left\|r_1^{\mathrm{FG}}\right\| \leq \gamma_1 \left\|r_0\right\|$.

    Let us assume that the statement holds for $m = k > 1$ with $\max_{j \leq k} \gamma_j < 1$. Then, $$\left\|r_k^{\mathrm{FG}}\right\| / \left\|r_{k-1}^{\mathrm{FG}}\right\| \leq \gamma_k,$$
    and hence
    \begin{align*}
        \frac{\|r_{k+1}^{\mathrm{P}}\|}{\sqrt{1-(\left\|r_k^{\mathrm{FG}}\right\| / \left\|r_{k-1}^{\mathrm{FG}}\right\|)^2}} \leq \gamma_{k+1}.
    \end{align*}
    Combining this with the bound \eqref{eq:FGMRESFFOMbound} and the equality \eqref{eq:FFOM_FGMRES} yields
    \begin{align*}
        \left\|r_{k+1}^{\mathrm{FG}}\right\| \leq \left\|r_k^{\mathrm{FF}}\right\|\|r_{k+1}^{\mathrm{P}}\| \leq \gamma_{k+1} \left\|r_k^{\mathrm{FG}}\right\|,
    \end{align*}
    which completes the proof.
\end{proof}

\subsection{An explicit bound on the FGMRES residual}
The assumption on the~$\gamma_j$ in Proposition~\ref{prop:recurrence_gamma}, namely that $\max_{j \leq m} \gamma_{j} < 1$, guarantees that the denominator in the expression for $\gamma_{j}$ is a positive real number for all $j \leq m$. If we assume 
\[
\left\|r_j^{\mathrm{P}}\right\| = \| v_j - A z_j \| \leq \mu \leq \frac{1}{2} \quad \text{for all $j$},
\]
we can derive a similar recursive bound without needing to assume $\max_{j \leq m} \gamma_{j} < 1$. The resulting bound is easier to analyze and allows us to express a bound for $\left\|r_m^{\mathrm{FG}}\right\|$ purely in terms of the contraction factor~$\mu$.
\begin{lemma} \label{prop:recurrence}
    Let $0 < \mu \leq \frac{1}{2}.$ We have 
    \begin{align}
        \begin{aligned} \label{eq:successive_rG}
             & \left\|r_m^{\mathrm{FG}}\right\| \leq \omega_m \left\|r_{m-1}^{\mathrm{FG}}\right\|, \\
             & \omega_m := \frac{\mu}{\sqrt{1-\omega_{m-1}^2}}, \quad \omega_0 := 0,
        \end{aligned}
    \end{align}
    for all $m \in \Z^+$. 
\end{lemma}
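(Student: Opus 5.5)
The plan is to show first that the scalar recursion $\omega_m = \mu/\sqrt{1-\omega_{m-1}^2}$, $\omega_0=0$, is well defined and stays in $[0,1)$ for all $m$ when $0<\mu\le 1/2$. Then the main inequality follows by induction, with the bound \eqref{eq:FGMRESFFOMbound} and Proposition~\ref{prop:ffom_fgmres} doing the work, as in the proof of Proposition~\ref{prop:recurrence_gamma}.

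\textbf{Step 1 (the scalar recursion).} Consider $f(\omega) = \mu/\sqrt{1-\omega^2}$ on $[0,1)$; it is increasing. Its fixed points satisfy $\omega^2(1-\omega^2)=\mu^2$, i.e.
\[
\omega^2 = \tfrac{1}{2}\bigl(1 \pm \sqrt{1-4\mu^2}\bigr).
\]
These are real exactly because $\mu\le 1/2$. Let $\nu$ be the smaller fixed point,
\[
\nu^2=\tfrac12\bigl(1-\sqrt{1-4\mu^2}\bigr),
\]
which equals $\sin^2(\tfrac12\arcsin 2\mu)$. Then $\nu\le 1/\sqrt2<1$. Since $\omega_0=0\le\nu$ and $f$ is increasing with $f(\nu)=\nu$, induction gives $0\le\omega_{m-1}\le\omega_m\le\nu<1$ for all $m$. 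In particular, every $\omega_m$ is a well-defined real number in $[0,1)$.

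\textbf{Step 2 (induction on the residuals).} The base case $m=1$ is immediate: $\|r_1^{\mathrm{FG}}\|\le\|r_0\|\,\|r_1^{\mathrm{P}}\|\le\mu\|r_0\|=\omega_1\|r_0\|$, since $x_0+z_1\|r_0\|$ is admissible. For the inductive step, assume $\|r_{k}^{\mathrm{FG}}\|\le\omega_k\|r_{k-1}^{\mathrm{FG}}\|$. Because $\omega_k<1$, this gives $\|r_k^{\mathrm{FG}}\|<\|r_{k-1}^{\mathrm{FG}}\|$ (when $r_{k-1}^{\mathrm{FG}}\neq 0$), so $r_k^{\mathrm{FF}}$ exists. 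Proposition~\ref{prop:ffom_fgmres} together with monotonicity of $t\mapsto 1/\sqrt{1-t^2}$ then yields
\[
\|r_k^{\mathrm{FF}}\|\le \|r_k^{\mathrm{FG}}\|/\sqrt{1-\omega_k^2}.
\]
Applying \eqref{eq:FGMRESFFOMbound} and \eqref{eq:assumemu} gives
\[
\|r_{k+1}^{\mathrm{FG}}\|\le \mu\|r_k^{\mathrm{FG}}\|/\sqrt{1-\omega_k^2}=\omega_{k+1}\|r_k^{\mathrm{FG}}\|.
\]

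\textbf{Step 3 (degenerate cases).} If some residual $r_{k-1}^{\mathrm{FG}}$ vanishes, all later residuals vanish by the minimization property, and the bound holds trivially. This is the same situation as a breakdown $h_{k+1,k}=0$, i.e.\ the exact solution has been found.

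The only genuinely new point compared with Proposition~\ref{prop:recurrence_gamma} is Step 1. There, the hypothesis $\max_j\gamma_j<1$ is replaced by showing that the worst-case sequence $\omega_m$ never reaches $1$, which is exactly where $\mu\le1/2$ enters. I also expect some care is needed with the weak inequalities when $\mu=1/2$, where $\nu=1/\sqrt2$ is a double fixed point. The argument above still works in that case, since it only uses $\omega_m\le\nu<1$.
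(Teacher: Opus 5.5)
Your proposal is correct and follows the paper's proof. Both first show that the $\omega_j$ stay strictly below $1$ when $\mu\le\frac12$, and then repeat the induction of Proposition~\ref{prop:recurrence_gamma} using \eqref{eq:FGMRESFFOMbound} and Proposition~\ref{prop:ffom_fgmres}. The only difference is in your Step 1: the paper uses the cruder invariant interval $(0,1/\sqrt{2}]$, via $\omega_{j+1}\le\sqrt{2}\mu\le 1/\sqrt{2}$, whereas your fixed-point barrier $\nu$ gives the sharper bound $\omega_m\le\nu$ and monotonicity at once, anticipating Lemma~\ref{lem:monotonic_omega} and Proposition~\ref{prop:mu_range}.
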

\begin{proof}
    First, we show that $\omega_{j} < 1$ for all $j$ whenever $0 < \mu \leq \frac{1}{2}.$ We do this by showing that if $\omega_j$ lies in the interval $\left(0, \frac{1}{\sqrt{2}}\right]$, so does $\omega_{j+1}$. Indeed, if $0 < \mu \leq \frac{1}{2}$ and $0 < \omega_{j} \leq \frac{1}{\sqrt{2}}$, then
    \begin{align*}
       \omega_{j+1} = \frac{\mu}{\sqrt{1-\omega_{m}^2}} \leq \sqrt{2} \mu \leq \frac{1}{\sqrt{2}}.
    \end{align*}
    This, combined with the fact that clearly $\omega_{j+1} > 0$, proves the result. The remaining proof of the lemma is similar to the proof of Proposition \ref{prop:recurrence_gamma}.
\end{proof}

The following two lemmas characterize the numbers $\omega_m$ appearing in Lemma~\ref{prop:recurrence}.

\begin{lemma} \label{lem:monotonic_omega}
 Let $0 < \mu \leq \frac{1}{2}.$ The sequence $(\omega_j)_{j \in \Z^+}$ is monotonically increasing.
\end{lemma}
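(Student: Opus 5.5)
We prove monotonicity by induction on $j$, writing the recursion of Lemma~\ref{prop:recurrence} as $\omega_{j+1} = f(\omega_j)$ with
\[
f(t) := \frac{\mu}{\sqrt{1-t^2}}, \qquad t \in [0,1).
\]
On $[0,1)$ the function $f$ is strictly increasing, since $1-t^2$ decreases and is positive there. By the proof of Lemma~\ref{prop:recurrence}, every $\omega_j$ with $j \geq 1$ lies in $(0, 1/\sqrt{2}]$, and $\omega_0 = 0$. Hence all iterates stay in the domain where $f$ is well defined and increasing.

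For the base case, $\omega_0 = 0$ and $\omega_1 = f(0) = \mu > 0$, so $\omega_0 < \omega_1$.

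For the induction step, suppose $\omega_{j-1} < \omega_j$ with both in $[0,1)$. Applying the strictly increasing map $f$ gives
\[
\omega_j = f(\omega_{j-1}) < f(\omega_j) = \omega_{j+1}.
\]
This is the standard argument that a monotone map's orbit is monotone in the direction of its first step. It yields that $(\omega_j)_{j}$ is strictly increasing, in particular monotonically increasing for $j \in \Z^+$.

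The only delicate point is ensuring the iterates never leave the domain $[0,1)$ of $f$, so that the square root stays real and $f$ stays monotone. This is exactly the invariance of $(0,1/\sqrt{2}]$ already established in Lemma~\ref{prop:recurrence} under $0<\mu\le 1/2$, so I would cite it directly.
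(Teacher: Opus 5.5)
Your proposal is correct and follows the same route as the paper. Both write $\omega_{j+1}=f(\omega_j)$ with $f(t)=\mu/\sqrt{1-t^2}$ strictly increasing on $[0,1)$, and induct from $\omega_0=0<\omega_1=\mu$. You are slightly more careful than the paper, which phrases the base case as $f(\omega_1)>f(\omega_0)$ and leaves implicit that the iterates stay in $[0,1)$; you correctly import that from the invariance of $(0,1/\sqrt{2}]$ shown in Lemma~\ref{prop:recurrence}.
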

\begin{proof}
    The sequence $(\omega_j)_{j \in \Z^+}$ of \eqref{eq:successive_rG} can be generated as $\omega_j = f(\omega_{j-1})$, where
    $$f(x) = \frac{\mu}{\sqrt{1-x^2}}.$$
    The result follows from the fact that $f$ is strictly increasing in the interval $x \in [0,1)$, while the base case $f(\omega_1) > f(\omega_0)$ is clearly satisfied.
\end{proof}
In the following lemma, we denote $\omega_m = \omega_m(\mu)$ to emphasize the dependence of the value for $\omega_m$ on the parameter $\mu$.
\begin{lemma} \label{lem:monotonic_in_mu}
    Let $m \in \Z^+$ and $0<\mu_1 < \mu_2 \leq \frac{1}{2}$. It holds that $\omega_m(\mu_1) < \omega_m(\mu_2)$.
\end{lemma}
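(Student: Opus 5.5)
We argue by induction on $m$, using the representation $\omega_j(\mu) = f_\mu(\omega_{j-1}(\mu))$ with
\[
f_\mu(x) = \frac{\mu}{\sqrt{1-x^2}},
\]
as in the proof of Lemma~\ref{lem:monotonic_omega}. Two monotonicity facts drive the argument. First, for fixed $x\in[0,1)$ the map $\mu \mapsto f_\mu(x)$ is strictly increasing, since it is linear in $\mu$ with positive slope $1/\sqrt{1-x^2}$. Second, for fixed $\mu>0$ the map $x\mapsto f_\mu(x)$ is strictly increasing on $[0,1)$.

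For the base case $m=1$, we have $\omega_1(\mu) = f_\mu(0) = \mu$, so $\omega_1(\mu_1)=\mu_1<\mu_2=\omega_1(\mu_2)$.

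For the induction step, assume $\omega_{m-1}(\mu_1) < \omega_{m-1}(\mu_2)$. For $m-1=0$ this is only a non-strict equality $0=0$, which is why the base case is handled separately; a non-strict inequality at step $m-1$ also suffices below. By the proof of Lemma~\ref{prop:recurrence}, both values lie in $[0,1/\sqrt{2}]\subset[0,1)$ because $\mu_1,\mu_2\le 1/2$, so all quantities are well defined. We then chain the two monotonicity facts:
\[
\omega_m(\mu_1) = f_{\mu_1}\bigl(\omega_{m-1}(\mu_1)\bigr) \le f_{\mu_1}\bigl(\omega_{m-1}(\mu_2)\bigr) < f_{\mu_2}\bigl(\omega_{m-1}(\mu_2)\bigr) = \omega_m(\mu_2).
\]
The first inequality uses monotonicity in $x$, and the second, strict, inequality uses monotonicity in $\mu$. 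This closes the induction.

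There is no real obstacle here. The one point needing care is making sure every $\omega_j$ stays in $[0,1)$ so that $f_\mu$ is defined and increasing there. This is already settled by the invariant interval $(0,1/\sqrt{2}]$ established in the proof of Lemma~\ref{prop:recurrence}, which applies to both $\mu_1$ and $\mu_2$.
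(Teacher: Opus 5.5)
Your proof is correct, but it takes a different route from the paper. The paper differentiates the recurrence with respect to $\mu$ and obtains a linear recurrence for $d\omega_m/d\mu$. Both terms of that recurrence are positive by induction, and a positive derivative on $(0,\frac12]$ then gives strict monotonicity.

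You avoid calculus and use a discrete comparison principle instead. Because $f_\mu(x)=\mu/\sqrt{1-x^2}$ is increasing in $x$ and strictly increasing in $\mu$, an ordering $\omega_{m-1}(\mu_1)\le\omega_{m-1}(\mu_2)$, even a non-strict one, carries through one step of the recurrence and becomes strict. This is more elementary: you need neither differentiability of $\mu\mapsto\omega_m(\mu)$ nor the mean value theorem.

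It is also more robust. You only need the iterates of the larger parameter $\mu_2$ to stay below $1$, since those of $\mu_1$ are then dominated automatically. The derivative argument, by contrast, needs $\omega_j(\mu)<1$ in advance for every $\mu$ in the interval. Your argument therefore extends beyond $\mu\le\frac12$ to any $\mu_2$ with $\omega_1(\mu_2),\dots,\omega_{m-1}(\mu_2)<1$, with well-definedness now supplied by the induction itself. This is the form in which the lemma is actually used for $\mu<\mu_m$ in the proof of Theorem~\ref{thm:explicit_bound}.

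The same chaining also justifies $\gamma_j\le\omega_j$ when $\|r_j^{\mathrm{P}}\|\le\mu$, which the paper leaves implicit in Lemma~\ref{prop:recurrence}. In return, the paper's approach gives a quantitative recurrence for the sensitivity $d\omega_m/d\mu$, though the qualitative statement does not need it.
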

\begin{proof}
    Differentiating $\omega_m$ of \eqref{eq:successive_rG} with respect to $\mu$ yields a linear recurrence for the derivatives:
    \[\frac{d\omega_m}{d\mu} = \frac{1}{\sqrt{1 - \omega_{m-1}^2}} + \frac{\mu \omega_{m-1}}{(1 - \omega_{m-1}^2)^{3/2}} \left( \frac{d\omega_{m-1}}{d\mu} \right).\]
    Now, it is easy to prove by induction that $\frac{d\omega_m}{d\mu} > 0$ in the interval $\mu \in (0, \frac{1}{2}]$. In the base step $m=1$, we have $\omega_1 = \mu$, so $\frac{d\omega_1}{d\mu} = 1 > 0$. In the inductive step, we assume that $\frac{d\omega_{m-1}}{d\mu} > 0$, which implies that both terms in the above expression for $\frac{d\omega_m}{d\mu}$ are positive for $m\geq2$, as $\omega_{m-1} \in (0,1)$. Because the derivative is strictly positive for all $m$ in the interval $\mu \in (0,\frac{1}{2}]$, the function $\omega_m(\mu)$ is strictly increasing in the interval $\mu \in (0,\frac{1}{2}]$.
\end{proof}

We are now in the position to state one of our main theorems.

\begin{theorem} \label{thm:explicit_bound}
    Let $0 < \mu \leq \frac{1}{2}$. For all $m<N$, the FGMRES residual at step $m$ satisfies
    \begin{align} \label{eq:cheb_bound}
        \left\|r_m^{\mathrm{FG}}\right\| & \leq \left\|r_0\right\| \frac{\mu^{m/2}}{\sqrt{U_m\left(\frac{1}{2\mu}\right)}},
    \end{align}
    where $U_m$ denotes the degree~$m$ Chebyshev polynomial of the second kind.
    Written explicitly, the bound becomes 
    \begin{align} \label{eq:explicit_bound}
        \left\|r_m^{\mathrm{FG}}\right\| & \leq \left\|r_0\right\| \mu^m \left( \frac{\sqrt{1 - 4\mu^2}}{\left( \frac{1 + \sqrt{1 - 4\mu^2}}{2} \right)^{m+1} - \left( \frac{1 - \sqrt{1 - 4\mu^2}}{2} \right)^{m+1}} \right)^{1/2}.
    \end{align}
\end{theorem}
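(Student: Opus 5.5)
The plan is to telescope the recursive bound of Lemma~\ref{prop:recurrence}, which gives
\[
\|r_m^{\mathrm{FG}}\| \leq \|r_0\| \prod_{j=1}^m \omega_j ,
\]
and then evaluate the product $\prod_{j=1}^m \omega_j$ in closed form. The hypothesis $m<N$ only ensures that the Arnoldi process has not terminated early (if it has, the residual is already zero and the bound holds trivially). The central step is to linearize the nonlinear recurrence $\omega_j = \mu/\sqrt{1-\omega_{j-1}^2}$.

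Setting $s_j := \omega_j^2$, the recurrence becomes $s_j = \mu^2/(1-s_{j-1})$. This is a Möbius recurrence, so the standard substitution is $s_j = \mu^2\, q_{j-1}/q_j$ for a sequence $q_j$. Substituting gives
\[
\mu^2 \frac{q_{j-1}}{q_j} = \frac{\mu^2}{1-\mu^2 q_{j-2}/q_{j-1}} = \frac{\mu^2 q_{j-1}}{q_{j-1}-\mu^2 q_{j-2}},
\]
which holds provided $q_j = q_{j-1} - \mu^2 q_{j-2}$. The initial condition $s_0 = 0$ is matched by taking $q_{-1}=0$ and $q_0=1$, and then $q_1=1$, consistent with $s_1=\mu^2$.

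With this choice the product telescopes:
\[
\prod_{j=1}^m s_j = \mu^{2m}\, \frac{q_0}{q_m} = \frac{\mu^{2m}}{q_m}.
\]
It remains to identify $q_m$. I would compare with the Chebyshev recurrence $U_{k+1}(x) = 2xU_k(x) - U_{k-1}(x)$. The rescaled sequence $q_m = \mu^m U_m\bigl(\tfrac{1}{2\mu}\bigr)$ satisfies
\[
\mu^{m}U_m - \mu^2\mu^{m-2}U_{m-2} = \mu^m\bigl(U_m - U_{m-2}\bigr) = \mu^m \cdot 2x\,U_{m-1}\big|_{x=1/(2\mu)} = \mu^{m-1}U_{m-1},
\]
which is exactly $q_{m-1}$. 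It also has the right initial data, since $U_0=1$ and $U_{-1}=0$. Hence $\prod_{j=1}^m \omega_j^2 = \mu^m / U_m\bigl(\tfrac{1}{2\mu}\bigr)$, and taking square roots gives \eqref{eq:cheb_bound}.

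For the explicit form \eqref{eq:explicit_bound}, I would solve the linear recurrence $q_j = q_{j-1}-\mu^2 q_{j-2}$ directly. Its characteristic roots are $\lambda_\pm = \bigl(1\pm\sqrt{1-4\mu^2}\bigr)/2$, and with the initial data above,
\[
q_m = \frac{\lambda_+^{m+1}-\lambda_-^{m+1}}{\lambda_+-\lambda_-}, \qquad \lambda_+-\lambda_- = \sqrt{1-4\mu^2}.
\]
Substituting this into $\mu^{2m}/q_m$ yields \eqref{eq:explicit_bound}.

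The boundary case $\mu=\tfrac12$ gives a double root, where this formula degenerates to $0/0$. There the bound must be read as a limit, with $q_m=(m+1)2^{-m}$ and $U_m(1)=m+1$, so it is worth remarking on this separately. Positivity of the $q_j$, needed for the square roots and for the division, follows from Lemma~\ref{prop:recurrence}: $\omega_j\in(0,1)$ forces each ratio $q_j/q_{j-1} = \mu^2/s_j$ to be positive.

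The main obstacle is finding the right linearizing substitution for the squared recurrence. Once the ansatz $s_j=\mu^2 q_{j-1}/q_j$ is chosen, the rest is routine verification.
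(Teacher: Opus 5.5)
Your proposal is correct and follows essentially the paper's route. Your ansatz $s_j=\mu^2 q_{j-1}/q_j$ is exactly the paper's $x_m=a_m/b_m$ with $a_m=\mu^2 b_{m-1}$ (so $q_m=b_m$), and the telescoping product, the roots $r_\pm$ and the identification $b_m=\mu^m U_m(\frac{1}{2\mu})$ coincide, with positivity taken from $\omega_j<1$ in Lemma~\ref{prop:recurrence} rather than the paper's detour through $\mu_m$. Two small remarks: your Chebyshev check contains two sign slips that cancel, and it should read $q_m+\mu^2 q_{m-2}=\mu^m(U_m+U_{m-2})=\mu^{m-1}U_{m-1}=q_{m-1}$ (note $U_m-U_{m-2}\neq 2xU_{m-1}$); your treatment of the double root at $\mu=\frac12$, where the explicit formula is $0/0$, addresses a case the paper's proof passes over.
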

\begin{proof}
    Applying \eqref{eq:successive_rG} recursively leads to
    \begin{align} \label{eq:bound}
        \left\|r_m^{\mathrm{FG}}\right\| \leq \left\|r_0\right\| \prod_{j=1}^m \omega_j.
    \end{align}
    We seek to express $ \prod_{j=1}^m \omega_j$ purely in terms of the contraction factor $\mu$. To simplify the notation, define $x_m := \omega_m^2$ so that the recurrence \eqref{eq:successive_rG} becomes
    $$x_m = \frac{\mu^2}{1 - x_{m-1}}, \quad x_0 = 0.$$
    We approach this fractional recurrence relation by expressing it as a system of two linear recurrence relations. To this end, let $\{a_j\}_{j \in \N}, \{b_j\}_{j \in \N}$ denote sequences such that $x_m = \frac{a_m}{b_m}$. Since $x_0 = \omega_0^2 = 0$, we can set $a_0 = 0$ and $b_0 = 1$. Substituting $x_m = \frac{a_m}{b_m}$ into the recurrence relation gives
    \begin{align} \label{eq:recurrence}
        \frac{a_m}{b_m} = \frac{\mu^2}{1 - \frac{a_{m-1}}{b_{m-1}}} = \frac{\mu^2 b_{m-1}}{b_{m-1} - a_{m-1}}.
    \end{align}
    From this, we define the following system of two linear recurrence relations with initial conditions $a_0 = 0$ and $b_0 = 1$, whose solution clearly satisfies the relation \eqref{eq:recurrence}:
    \begin{align} \label{eq:recurrence_system}
        \begin{aligned}
            a_m & = \mu^2 b_{m-1},     \\
            b_m & = b_{m-1} - a_{m-1}.
        \end{aligned}
    \end{align}
    Now, $x_j = \frac{a_j}{b_j} = \frac{\mu^2 b_{j-1}}{b_j}$, which leads to convenient cancellations in the product $\prod_{j=1}^m x_j$:
    $$\prod_{j=1}^m x_j = \prod_{j=1}^m \left( \frac{\mu^2 b_{j-1}}{b_j} \right) = \mu^{2m} \left( \frac{b_0}{b_1} \cdot \frac{b_1}{b_2} \cdots \frac{b_{m-1}}{b_m} \right) = \mu^{2m} \frac{b_0}{b_m}.$$
    Since $b_0=1$, we have that $\prod_{j=1}^m \omega_j = \frac{\mu^m}{\sqrt{b_m}}.$
    Now, we only need an explicit formula for $b_m$. By substituting the first equation of \eqref{eq:recurrence_system} into the second, we get a single second-order linear recurrence relation for $b_m$:
    \begin{align} \label{eq:2nd_order_recurrence}
        b_m = b_{m-1} - \mu^2 b_{m-2}.
    \end{align}
    The characteristic equation for $b_m - b_{m-1} + \mu^2 b_{m-2} = 0$ is
    $$r^2 - r + \mu^2 = 0,$$
    whose roots are
    \begin{align*}
        r_{\pm} = \frac{1 \pm \sqrt{1 - 4\mu^2}}{2}.
    \end{align*}
    Hence, all solutions are of the form $b_m = C_1(r_+)^m + C_2(r_-)^m$, and using the initial conditions $b_0 = 1$ and $b_1 = 1$, we can solve for the coefficients $C_1, C_2$. Standard algebraic manipulation yields
    \begin{align} \label{eq:bm}
        b_m = \frac{(r_+)^{m+1} - (r_-)^{m+1}}{r_+ - r_-}.
    \end{align}
    This lets us express the bound \eqref{eq:bound} as
    \begin{equation*}
        \begin{aligned}
            \left\|r_m^{\mathrm{FG}}\right\| & \leq \left\|r_0\right\| \mu^m \left(  \frac{r_+ - r_-}{(r_+)^{m+1} - (r_-)^{m+1}} \right)^{1/2}                                                                                       \\
                                             & =\left\|r_0\right\| \mu^m \left( \frac{\sqrt{1 - 4\mu^2}}{\left( \frac{1 + \sqrt{1 - 4\mu^2}}{2} \right)^{m+1} - \left( \frac{1 - \sqrt{1 - 4\mu^2}}{2} \right)^{m+1}} \right)^{1/2}.
        \end{aligned}
    \end{equation*}
    We also note that, after the change of variables $c_m = \frac{b_m}{\mu^m}, \ \mu = \frac{1}{2x}$, the recurrence relation \eqref{eq:2nd_order_recurrence} becomes that of the Chebyshev polynomials of the second kind, which are polynomials defined by 
    $$
    { U_{m}(\cos \theta )\sin \theta ={\sin }{\big (}(m+1)\theta {)}, \quad  0 \leq \theta \leq \pi.}
    $$ As such, we can express the bound \eqref{eq:explicit_bound} alternatively as
    \begin{equation*} 
        \begin{aligned}
            \left\|r_m^{\mathrm{FG}}\right\| & \leq \left\|r_0\right\| \frac{\mu^{m/2}}{\sqrt{U_m\left(\frac{1}{2\mu}\right)}}.
        \end{aligned}
    \end{equation*}
    These bounds are valid at iteration $m$ if the sequence of contraction factors $(\omega_j)_j$ satisfies $\omega_j < 1$ for all $j < m$, and $\omega_m \leq 1$. Setting $\omega_m = 1$ yields $a_m = b_m$, which in turn implies that $b_{m+1}=0.$ This is attained precisely at the roots of $U_{m+1}$, which are well known to be ${x_{k}=\cos \left({\frac {k}{m+2}}\pi \right),\ k=1,\dots ,m+1.}$ Hence, the smallest positive value for $\mu$ for which $\omega_m = 1$ is
    \begin{equation}\label{eq:mum}
        \mu_m = \frac{1}{2 \cos\left(\frac{\pi}{m+2}\right)}.
    \end{equation}
    Lemma~\ref{lem:monotonic_in_mu} implies that, for $\mu < \mu_m$, it holds that $\omega_j < 1$ for all $j \leq m$. 
    Moreover, it holds that the sequence $(\mu_m)_m$ is monotonically decreasing with $\lim_{m \rightarrow \infty} \mu_m = \frac{1}{2}$. 
    It follows that the bound \eqref{eq:explicit_bound} holds for $\mu \in [0,\frac{1}{2}].$ This completes the proof.
\end{proof}


\subsection{Guaranteed contraction in each iteration}
The bound \eqref{eq:cheb_bound} characterizes the global convergence behaviour. While \eqref{eq:successive_rG_gamma} characterizes the local behaviour, an explicit formula for $\gamma_m$ is not available a priori. However, the simplified version of the local bound given in \eqref{eq:successive_rG} allows for this. 

\begin{corollary}\label{cor:localrate}
    In the notation of the proof of Theorem \ref{thm:explicit_bound}, using the fact that $a_m = \mu^2 b_{m-1}$ together with the observation that $b_{m} = \mu^{m} U_{m}\left(\frac{1}{2\mu}\right)$, we can deduce that
    $$\omega_m = \mu^{\frac{1}{2}} \sqrt{\frac{U_{m-1}\left(\frac{1}{2\mu}\right)}{U_{m}\left(\frac{1}{2\mu}\right)}}.$$
    The relationship between the residuals of successive iterations is then
    \begin{align}
        \begin{aligned}
             & \left\|r_m^{\mathrm{FG}}\right\| \leq \left\|r_{m-1}^{\mathrm{FG}}\right\| \mu^{\frac{1}{2}} \sqrt{\frac{U_{m-1}\left(\frac{1}{2\mu}\right)}{U_{m}\left(\frac{1}{2\mu}\right)}}. \\
        \end{aligned}
    \end{align}
\end{corollary}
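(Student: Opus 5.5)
The plan is to read off $\omega_m$ from the linearisation built in the proof of Theorem~\ref{thm:explicit_bound}, and then insert it into Lemma~\ref{prop:recurrence}.

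\textbf{Step 1: $\omega_m$ as a ratio.} Recall $x_m=\omega_m^2=a_m/b_m$, where $(a_m,b_m)$ solve the linear system \eqref{eq:recurrence_system} with $a_0=0$, $b_0=1$. Since $a_m=\mu^2 b_{m-1}$, this gives directly
\[
\omega_m^2=\frac{\mu^2 b_{m-1}}{b_m}.
\]
Here $b_m>0$ for all $m$ when $0<\mu\le \frac12$, because the ratio $a_m/b_m$ coincides with the positive iterate $x_m\in(0,1)$. So the ratio, and its square root, are well defined.

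\textbf{Step 2: the identity $b_m=\mu^m U_m\left(\frac{1}{2\mu}\right)$.} I would prove this by induction rather than rely on the informal change of variables in the theorem's proof.
\begin{itemize}
\item Base cases: $b_0=1=U_0$, and $b_1=1=\mu\cdot 2\cdot\frac{1}{2\mu}=\mu U_1\left(\frac{1}{2\mu}\right)$, using $U_1(t)=2t$.
\item Inductive step: substitute $b_{m-1}=\mu^{m-1}U_{m-1}$ and $b_{m-2}=\mu^{m-2}U_{m-2}$ into \eqref{eq:2nd_order_recurrence}, all evaluated at $t=\frac{1}{2\mu}$. This gives $b_m=\mu^{m-1}U_{m-1}(t)-\mu^{m}U_{m-2}(t)$. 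Factoring out $\mu^m$ and writing $\mu^{-1}=2t$ yields $\mu^m\bigl(2tU_{m-1}(t)-U_{m-2}(t)\bigr)=\mu^m U_m(t)$, by the three-term recurrence for Chebyshev polynomials of the second kind.
\end{itemize}

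\textbf{Step 3: the formula for $\omega_m$.} Substituting Step 2 into Step 1 gives
\[
\omega_m^2=\mu^2\,\frac{\mu^{m-1}U_{m-1}}{\mu^m U_m}=\mu\,\frac{U_{m-1}\left(\frac{1}{2\mu}\right)}{U_m\left(\frac{1}{2\mu}\right)}.
\]
Since $\omega_m>0$ (shown in the proof of Lemma~\ref{prop:recurrence}), taking the positive square root gives the stated formula.

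\textbf{Step 4: the residual bound.} The successive-residual inequality now follows immediately from Lemma~\ref{prop:recurrence}, i.e.\ $\|r_m^{\mathrm{FG}}\|\le\omega_m\|r_{m-1}^{\mathrm{FG}}\|$, with this expression for $\omega_m$.

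\textbf{Main obstacle.} The only delicate point is positivity of $U_m\left(\frac{1}{2\mu}\right)$. For $\mu<\frac12$ the argument $\frac{1}{2\mu}$ exceeds $1$, which is larger than every root of $U_m$, so the value is positive. For $\mu=\frac12$ we have $U_m(1)=m+1>0$. Alternatively, positivity follows from $b_m>0$ in Step 1. In either case the square root is real and the division is legitimate.
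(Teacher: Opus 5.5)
Your proposal is correct and follows the same argument as the paper. You write $\omega_m^2 = a_m/b_m = \mu^2 b_{m-1}/b_m$, substitute $b_m=\mu^m U_m\bigl(\frac{1}{2\mu}\bigr)$, and insert the result into Lemma~\ref{prop:recurrence}. Your explicit induction for that identity, together with the root-location argument for $U_m\bigl(\frac{1}{2\mu}\bigr)>0$, makes rigorous what the paper compresses into the change of variables $c_m=b_m/\mu^m$, $\mu=\frac{1}{2x}$. The root-location argument is the cleaner of your two positivity checks, because your Step~1 remark already presupposes $b_m\neq 0$.
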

The following proposition characterizes the limiting behavior of the sequence~$(\omega_m)$.

\begin{proposition} \label{prop:mu_range}
    Let $\mu \leq \frac{1}{2}.$ It holds that
\[
\lim_{m \to \infty} \omega_m = \sin(0.5 \arcsin(2\mu)) \leq \sqrt{\mu}.
\]
with
\[
 \lim_{\mu \to 1/2} \sin(0.5 \arcsin(2\mu)) = \frac{1}{\sqrt{2}}.
\]
\end{proposition}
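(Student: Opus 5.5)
By Lemma~\ref{lem:monotonic_omega}, the sequence $(\omega_m)$ is monotonically increasing. In the proof of Lemma~\ref{prop:recurrence} it was shown to stay in $(0,1/\sqrt{2}]$, so it is bounded above. Hence the limit $\omega_\infty := \lim_{m\to\infty}\omega_m$ exists and lies in $(0,1/\sqrt{2}]$. The map $f(x)=\mu/\sqrt{1-x^2}$ is continuous on $[0,1)$, so $\omega_\infty$ is a fixed point of $f$. Squaring, $x=\omega_\infty^2$ satisfies $x(1-x)=\mu^2$, i.e.\ $x^2-x+\mu^2=0$. The roots are $r_\pm=\tfrac{1\pm\sqrt{1-4\mu^2}}{2}$, the same as in the proof of Theorem~\ref{thm:explicit_bound}.

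The step that needs care is choosing the correct root. I would argue by monotonicity: $x_0=0\le r_-$, and $g(x)=\mu^2/(1-x)$ is increasing with $g(r_-)=r_-$. By induction, $x_m\le r_-$ for all $m$, so the limit is $r_-$ and not $r_+$. The only exception is the degenerate case $\mu=1/2$, where the two roots coincide. Alternatively, the explicit formula from Corollary~\ref{cor:localrate} with $b_m$ from \eqref{eq:bm} gives $x_m=\mu^2 b_{m-1}/b_m\to \mu^2/r_+=r_-$, since $r_+>r_-$ dominates; this also confirms the choice.

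Next, rewrite $r_-$ trigonometrically. Set $2\mu=\sin\theta$ with $\theta=\arcsin(2\mu)\in(0,\pi/2]$. Then $\sqrt{1-4\mu^2}=\cos\theta$, so
\[
r_-=\frac{1-\cos\theta}{2}=\sin^2(\theta/2).
\]
Taking the positive square root gives $\omega_\infty=\sin(\tfrac12\arcsin(2\mu))$.

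For the inequality $\le\sqrt{\mu}$, I would use $r_-r_+=\mu^2$ together with $r_-\le r_+$. These give $r_-^2\le \mu^2$, hence $r_-\le\mu$, and so $\omega_\infty\le\sqrt{\mu}$. Finally, as $\mu\to 1/2$ we have $\arcsin(2\mu)\to\pi/2$, so by continuity the limit value is $\sin(\pi/4)=1/\sqrt{2}$. I expect no real obstacle here; the only delicate point is the root selection and the $\mu=1/2$ edge case, where $r_+=r_-=1/2$ and the argument still applies directly.
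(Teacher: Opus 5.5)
Your proposal is correct and follows the paper's proof: you identify the limit as $\sqrt{r_-}$, convert it with the half-angle identity, use $r_-\le\sqrt{r_-r_+}=\mu$ for the bound, and pass to $\mu\to 1/2$ by continuity. You also justify two points the paper compresses into ``straightforward algebraic manipulation'', namely that the limit exists (monotone and bounded) and that the fixed point is $r_-$ rather than $r_+$, both by the invariance of $[0,r_-]$ and via $\mu^2 b_{m-1}/b_m\to\mu^2/r_+$.
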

\begin{proof}
    For $\mu \leq \frac{1}{2}$, straightforward algebraic manipulation yields
    $$\omega_* := \lim_{m \to \infty} \omega_m = \sqrt{r_-},$$ where $r_-$ is defined as in the proof of Theorem \ref{thm:explicit_bound}. The equality $$\sqrt{r_-} = \sin(0.5 \arcsin(2\mu))$$ follows from standard trigonometric identities. The observation
    \begin{align*}
        \omega_*^2 = r_- \leq \sqrt{r_- r_+} = \mu
    \end{align*}
    shows the upper bound of the first limit. The second limit follows from a direct evaluation.
\end{proof}

In particular, the fact that $\omega_m < 1$ for all $m$ guarantees that no breakdown can occur unless the residual drops to zero. This in turn guarantees that the FGMRES iteration converges in at most $N$ iterations.

\subsection{Bound on the FFOM residual} 
Due to the  relation between \mbox{FGMRES} and FFOM  in view of \eqref{eq:FFOM_FGMRES}, it is  possible to derive a convergence bound for FFOM.

\begin{theorem}
    Let $0 < \mu \leq \frac{1}{2}$. For all $m<N$, the FFOM residual satisfies
    \begin{equation} \label{eq:FFOM_full}
        \left\|r_m^{\mathrm{FF}}\right\| \leq \left\|r_0\right\| \frac{\mu^{(m-1)/2}}{\sqrt{U_{m+1}\left(\frac{1}{2\mu}\right)}},
    \end{equation}
    or, written explicitly,
        \begin{equation} \label{eq:FFOM_full_explicit}
        \left\|r_m^{\mathrm{FF}}\right\| \leq \left\|r_0\right\| \mu^m \left( \frac{r_+ - r_-}{\left( r_+ \right)^{m+2} - \left( r_- \right)^{m+2}} \right)^{1/2},
    \end{equation}
    where $r_{\pm}$ are defined as in the proof of Theorem \ref{thm:explicit_bound}.
\end{theorem}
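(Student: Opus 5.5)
The plan is to combine the exact FFOM--FGMRES relation of Proposition~\ref{prop:ffom_fgmres} with the one-step bound of Lemma~\ref{prop:recurrence}. The key idea is that the right-hand side of \eqref{eq:FFOM_FGMRES} is monotone in the FGMRES residual at step $m$. Write $\rho_m := \|r_m^{\mathrm{FG}}\|$. Then \eqref{eq:FFOM_FGMRES} can be rewritten as
\[
\left\|r_m^{\mathrm{FF}}\right\| = \frac{\rho_m\,\rho_{m-1}}{\sqrt{\rho_{m-1}^2-\rho_m^2}} .
\]
For fixed $\rho_{m-1}$, this is strictly increasing in $\rho_m\in[0,\rho_{m-1})$.

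Lemma~\ref{prop:recurrence} gives $\rho_m\le \omega_m\rho_{m-1}$, and we know $\omega_m<1$ for $\mu\le 1/2$. So $\rho_m<\rho_{m-1}$ unless the residual has already vanished, and then $r_m^{\mathrm{FF}}$ is defined, which is the condition required in Proposition~\ref{prop:ffom_fgmres}. Substituting the upper bound yields
\[
\left\|r_m^{\mathrm{FF}}\right\| \le \frac{\omega_m}{\sqrt{1-\omega_m^2}}\,\rho_{m-1} = \frac{\omega_m\,\omega_{m+1}}{\mu}\,\rho_{m-1},
\]
where the equality uses the defining recurrence $\omega_{m+1}=\mu/\sqrt{1-\omega_m^2}$. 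This is the step that needs care. One must check monotonicity of the map above, and handle the degenerate case $\rho_{m-1}=0$ separately; there the FFOM residual is zero (or the method has terminated), so the bound holds trivially.

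Next, bound $\rho_{m-1}\le\|r_0\|\prod_{j=1}^{m-1}\omega_j$ as in \eqref{eq:bound}. This gives
\[
\left\|r_m^{\mathrm{FF}}\right\|\le \frac{\|r_0\|}{\mu}\prod_{j=1}^{m+1}\omega_j .
\]
From the proof of Theorem~\ref{thm:explicit_bound} we have $\prod_{j=1}^{m+1}\omega_j=\mu^{m+1}/\sqrt{b_{m+1}}$, hence $\|r_m^{\mathrm{FF}}\|\le \|r_0\|\,\mu^m/\sqrt{b_{m+1}}$. That product identity holds as long as the $b_j$ stay positive, which is guaranteed by $\omega_j<1$ for all $j$.

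Finally, insert the two closed forms of $b_{m+1}$. Using $b_{m+1}=\mu^{m+1}U_{m+1}(\tfrac{1}{2\mu})$, as noted in Corollary~\ref{cor:localrate}, gives $\mu^{m}/\sqrt{\mu^{m+1}U_{m+1}} = \mu^{(m-1)/2}/\sqrt{U_{m+1}(\tfrac{1}{2\mu})}$, which is \eqref{eq:FFOM_full}. Using \eqref{eq:bm} with $m$ replaced by $m+1$, namely $b_{m+1}=(r_+^{m+2}-r_-^{m+2})/(r_+-r_-)$, gives \eqref{eq:FFOM_full_explicit}. At $\mu=1/2$ the roots coincide, so the explicit form must be read as its limit $b_{m+1}=(m+2)2^{-(m+1)}$. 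The Chebyshev form covers this case directly.

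I expect no serious obstacle. The only delicate point is the monotonicity argument in the first step, which allows the one-step FGMRES bound to be substituted into the FFOM identity, together with confirming that $r_m^{\mathrm{FF}}$ is well defined throughout.
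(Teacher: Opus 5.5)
Your proposal is correct and follows essentially the paper's route. The paper combines \eqref{eq:FFOM_FGMRES} with $\|r_m^{\mathrm{FG}}\|\le\omega_m\|r_{m-1}^{\mathrm{FG}}\|$ to get $\|r_m^{\mathrm{FF}}\|\le \|r_m^{\mathrm{FG}}\|/\sqrt{1-\omega_m^2}$, then applies $1-\omega_m^2=b_{m+1}/b_m$ and $\|r_m^{\mathrm{FG}}\|\le\|r_0\|\mu^m/\sqrt{b_m}$; this is the same identity as your $1/\sqrt{1-\omega_m^2}=\omega_{m+1}/\mu$ followed by telescoping to $b_{m+1}$, and your remarks on the degenerate case $\|r_{m-1}^{\mathrm{FG}}\|=0$ and on reading the explicit form at $\mu=1/2$ as the limit $b_{m+1}=(m+2)2^{-(m+1)}$ are valid refinements the paper leaves implicit.
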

\begin{proof}
    Let $\omega_m, a_m, b_m$ be defined as in the proof of Theorem \ref{thm:explicit_bound}. We can derive this bound by first combining \eqref{eq:FFOM_FGMRES} and \eqref{eq:successive_rG} to yield
    \begin{align*}
        \left\|r_m^{\mathrm{FF}}\right\| & \leq \frac{\left\|r_m^{\mathrm{FG}}\right\|}{\sqrt{1-\omega_m^2}}.
    \end{align*}
    Recall that $\omega_m^2 = a_m / b_m$ and $b_{m+1} = b_m - a_m$. Hence, $$1 - \omega_m^2 = 1 - \frac{a_m}{b_m} = \frac{b_m - a_m}{b_m} = \frac{b_{m+1}}{b_m}.$$
    Combining this with the inequality $$\left\|r_m^{\mathrm{FG}}\right\| \leq \left\|r_0\right\| \frac{\mu^m}{\sqrt{b_m}}$$
    yields the inequality
    \begin{align*}
        \left\|r_m^{\mathrm{FF}}\right\| \leq \frac{\left\|r_m^{\mathrm{FG}}\right\|}{\sqrt{1-\omega_m^2}} \leq \left\|r_0\right\| \frac{\mu^m}{\sqrt{b_{m+1}}}.
    \end{align*}
    The first bound follows from observing that $b_{m+1} = \mu^{m+1} U_{m+1}\left(\frac{1}{2\mu}\right)$, as per the proof of Theorem \ref{thm:explicit_bound}. The second bound follows when we substitute in the expression for $b_{m+1}$ given in \eqref{eq:bm}.
\end{proof}

\subsection{Poor preconditioners}


The analysis of the previous subsections assumes the availability of good-quality preconditioners satisfying $\|r_j^{\mathrm{P}}\| \leq \mu \leq 1/2$. In this subsection, we extend the analysis to the case $\mu > 1/2$. In this regime, the $\nu$-value of Phase~2 documented in Table~\ref{tab:mu} becomes $\nu=1$. In other words, the convergence is characterized by two phases: initial convergence and stagnation.

Even when $\mu > 1/2$, the convergence behaviour is perfectly captured by \eqref{eq:cheb_bound} of Theorem \ref{thm:explicit_bound} as long as the local contraction factors satisfy $\omega_j \leq 1$ for all $j \leq m$. By inspecting the proof of Theorem \ref{thm:explicit_bound}, we see that this corresponds to the interval $0 < \mu \leq \mu_m$, where $\mu_m$ is defined as in \eqref{eq:mum}:
    \begin{equation*}
        \mu_m = \frac{1}{2 \cos\left(\frac{\pi}{m+2}\right)}.
    \end{equation*}
For $\mu=\mu_m$, the denominator of the bound \eqref{eq:cheb_bound} simplifies to 1, 
and the bound becomes
\[
    \left\|r_m^{\mathrm{FG}}\right\| \leq \left\|r_0\right\| (\mu_m)^{\frac{m}{2}}.
\]
This describes the total reduction in the residual that is possible to guarantee. Beyond iteration $m$, the convergence might come to a complete standstill.  
In the stagnation phase, linear independence of the vectors $z_j$ generated by FGMRES cannot be guaranteed anymore. A breakdown can happen already in the first iteration where $\omega_j\leq 1$ is not satisfied. 


We define the stalling index $m^*$ as the index after which the theoretical bound \eqref{eq:cheb_bound} stalls. This gives us the length of the convergence phase. 
Table \ref{tab:mum} illustrates that as $\mu$ increases beyond 0.5, the stalling index $m^*$ rapidly decreases. Already for $\mu = 0.55$, the initial convergence phase lasts only for 5 iterations, after which the bound stagnates at relative residual $\approx 0.20$. 
A numerical illustration of this phenomenon is given in Figure~\ref{fig:fgmres_stalling}.

Our analysis suggests that, if possible, one should prioritise forming preconditioners that strictly satisfy $\mu \leq 1/2$ as this guarantees convergence to zero residual with at least the rate $\sqrt{\mu}$. Even slight violations of this condition can be detrimental for convergence.   


\begin{table}[ht]
    \centering
    \caption{Stalling index $m^*$ and the corresponding relative residual bound \eqref{eq:cheb_bound} for different values for $\mu$.}
    \small
    \label{tab:mum}
    \setlength{\tabcolsep}{10pt}{
        \begin{tabular}{l l l}
            Contraction factor $\mu$ & Stalling index $m^*$ & Residual bound $\frac{\mu^{m^*/2}}{\sqrt{U_{m^*}\left(\frac{1}{2\mu}\right)}}$ \\
            \hline
            $0.5$ & $\infty$ & $0$ \\
            $0.501$ & $47$ & $6.75 \times 10^{-8}$ \\
            $0.51$ & $13$ & $9.34 \times 10^{-3}$ \\
            $0.55$ & $5$ & $1.98 \times 10^{-1}$ \\
            $0.6$ & $3$ & $4.08 \times 10^{-1}$ \\
            $0.8$ & $1$ & $8.00 \times 10^{-1}$ \\
        \end{tabular}
    }
\end{table}

\section{Sharpness results}\label{sec:sharp}

We now list several results that characterize different ways in which our  FGMRES convergence bound is sharp. The first theorem states that for any nonsingular linear system $Ax=b$ and $\mu\leq 1/2$, there always exists a sequence of worst-case preconditioners attaining the bound in Theorem~\ref{thm:explicit_bound}.

\begin{theorem} [Sharpness of the FGMRES bound] \label{thm:sharpness}
    For any positive integer $m < N$, any invertible matrix $A \in \C^{N \times N}$, any right-hand side $b \in \C^N \setminus \{0\}$, and any contraction factor $0 < \mu \leq \frac{1}{2}$, there exists a sequence of preconditioners such that, for all $1 \leq j \leq m$, it holds that $\|r_j^{\mathrm{P}}\| \leq \mu$ and $$\big\| r_j^{\mathrm{FG}} \big\| = \big\| r_{j-1}^{\mathrm{FF}} \big\| \cdot \big\| r_{j}^{\mathrm{P}} \big\|.$$ 
    That is, the bound \eqref{eq:FGMRESFFOMbound} holds as an equality. For this choice of preconditioners, the bound in \eqref{eq:successive_rG} and hence also in Theorem \ref{thm:explicit_bound} hold as equalities. Consequently, the bound in Theorem~\ref{thm:explicit_bound} is sharp.
\end{theorem}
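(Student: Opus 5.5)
The plan is to build the preconditioners inductively, choosing each $z_j$ (equivalently, the preconditioner residual $r_j^{\mathrm{P}}=v_j-Az_j$) so that every inequality in the derivation of \eqref{eq:FGMRESFFOMbound} becomes an equality. Since $A$ is invertible, prescribing $r_j^{\mathrm{P}}$ determines $z_j=A^{-1}(v_j-r_j^{\mathrm{P}})$. Any linear operator $M_j^{-1}$ mapping $v_j$ to $z_j$ (for instance a rank-one correction of the identity) realises this choice. So the whole task is to choose the vectors $r_j^{\mathrm{P}}$.

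\textbf{Step 1: where the bound \eqref{eq:FGMRESFFOMbound} comes from.} Write $r_{j-1}^{\mathrm{FF}}=c\,v_j$ with $|c|=\|r_{j-1}^{\mathrm{FF}}\|$; this is possible because the FFOM residual lies in $\spn\{v_j\}$. The vector $x_{j-1}^{\mathrm{FF}}+c\,z_j$ lies in $x_0+\spn(Z_j)$, and its residual is $r_{j-1}^{\mathrm{FF}}-cAz_j=c\,r_j^{\mathrm{P}}$. Minimality of FGMRES then gives $\|r_j^{\mathrm{FG}}\|\le |c|\,\|r_j^{\mathrm{P}}\|$. Equality holds exactly when $c\,r_j^{\mathrm{P}}$ is the minimiser, i.e.\ (as in Lemma~\ref{lemma:AZ_orth}) when $r_j^{\mathrm{P}}\perp\spn(AZ_{j-1})$ and $r_j^{\mathrm{P}}\perp Az_j$.

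\textbf{Step 2: the choice of $r_j^{\mathrm{P}}$.} I set $r_j^{\mathrm{P}}=\mu u$ with $\|u\|=1$, so that $\|r_j^{\mathrm{P}}\|=\mu$ exactly. The condition $u\perp Az_j=v_j-\mu u$ becomes $u^*v_j=\mu$. The space $\spn(V_j)$ splits as $\spn(AZ_{j-1})\oplus\spn\{\hat q\}$, where $\hat q=r_{j-1}^{\mathrm{FG}}/\|r_{j-1}^{\mathrm{FG}}\|$, using Lemma~\ref{lemma:AZ_orth}; for $j=1$ we simply have $\hat q=v_1$. Since $j\le m<N$, there is a unit vector $w\perp\spn(V_j)$. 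I take
\[
u=\alpha\hat q+\beta w, \qquad \alpha=\mu/(\hat q^*v_j), \qquad |\beta|=\sqrt{1-|\alpha|^2}.
\]
This $u$ is orthogonal to $AZ_{j-1}$ and satisfies $u^*v_j=\mu$.

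\textbf{Step 3: feasibility, $|\alpha|\le1$ (the main obstacle).} Both $r_{j-1}^{\mathrm{FG}}$ and $r_{j-1}^{\mathrm{FF}}\propto v_j$ lie in $\spn(V_{j-1}\cup\{v_j\})$. From the computation in Proposition~\ref{prop:ffom_fgmres}, together with Lemma~\ref{lemma:successive_res}, I expect to get
\[
|\hat q^*v_j|=\|r_{j-1}^{\mathrm{FG}}\|/\|r_{j-1}^{\mathrm{FF}}\|=\sqrt{1-\bigl(\|r_{j-1}^{\mathrm{FG}}\|/\|r_{j-2}^{\mathrm{FG}}\|\bigr)^2}.
\]
By the induction hypothesis all earlier steps are equalities, so this ratio equals $\omega_{j-1}$. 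The requirement $|\alpha|\le 1$ therefore reads $\mu\le\sqrt{1-\omega_{j-1}^2}$, i.e.\ $\omega_j\le1$. This holds, since Lemma~\ref{prop:recurrence} gives $\omega_j\le 1/\sqrt2$.

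In fact the inequality is strict, so $\beta\neq0$. Hence $Az_j\notin\spn(V_j)$, $h_{j+1,j}\ne0$, and no breakdown occurs. As a consequence $r_j^{\mathrm{FF}}$ exists, $\|r_j^{\mathrm{FG}}\|<\|r_{j-1}^{\mathrm{FG}}\|$, and the induction proceeds. The delicate points I would verify carefully are the identity for $|\hat q^*v_j|$ and the bookkeeping in the base case.

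\textbf{Step 4: conclusion.} With these choices, \eqref{eq:FGMRESFFOMbound} holds with equality and $\|r_j^{\mathrm{P}}\|=\mu$. Proposition~\ref{prop:ffom_fgmres} is an identity, so the chain in the proof of Lemma~\ref{prop:recurrence} is an equality chain:
\[
\|r_j^{\mathrm{FG}}\|/\|r_{j-1}^{\mathrm{FG}}\|=\mu/\sqrt{1-\omega_{j-1}^2}=\omega_j .
\]
The product formula in the proof of Theorem~\ref{thm:explicit_bound} then gives equality in \eqref{eq:cheb_bound}.
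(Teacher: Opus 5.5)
Your proposal is correct and follows essentially the paper's proof. It uses the same induction and the same equality criterion $r_j^{\mathrm{P}}\perp\spn(AZ_j)$. It builds the same two-dimensional construction in $\spn(AZ_{j-1})^\perp$, spanned by the direction of $r_{j-1}^{\mathrm{FG}}$ (the paper's $u_k$) and a fresh unit vector orthogonal to $\spn(V_j)$ (the paper's $e_2$). It reaches the same feasibility condition $\mu\le\|r_{j-1}^{\mathrm{FG}}\|/\|r_{j-1}^{\mathrm{FF}}\|=\sqrt{1-\omega_{j-1}^2}$, i.e.\ $\omega_j\le 1$.

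The differences are minor. You parametrize $r_j^{\mathrm{P}}=\mu u$ directly rather than $Az_k=p_k+y_k$, which makes the norm constraint automatic and the orthogonality condition linear. Your check that $\beta\neq 0$ excludes breakdown makes explicit a point the paper leaves implicit.
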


\begin{proof}
    We prove the result by induction. The base case $k=1$ yields $$\big\| r_1^{\mathrm{FG}} \big\| = \big\| r_0 \big\| \cdot \big\| r_{1}^{\mathrm{P}} \big\|,$$ which clearly holds. From here on, we assume that the result holds for step $k-1$ and prove it for step $k < N$.

    Let $w_j := A z_j$ and correspondingly $W_j := A Z_j$. We define $\tilde x := x_{k-1}^{\mathrm{FF}} + c z_k$, where the scalar $c$ is chosen such that $r_{k-1}^{\mathrm{FF}} = c v_k$. Note that if $c=0$, the result of the theorem holds trivially. Therefore, from here on, we assume that $c \neq 0$. The corresponding residual is then $\tilde r = c r_k^{\mathrm{P}}.$ Clearly, $\|r_k^{\mathrm{FG}}\| \leq \|\tilde r\| = \|r_{k-1}^{\mathrm{FF}}\| \|r_k^{\mathrm{P}}\|$, as stated in bound~\eqref{eq:FGMRESFFOMbound}. Moreover, equality is equivalent to $\tilde r \perp \spn(W_k)$, which in turn is equivalent to $r_k^{\mathrm{P}} \perp \spn(W_k).$

    We construct $w_k \in \C^N$ satisfying $r_k^{\mathrm{P}} = v_k - w_k \perp \spn(W_k)$ as well as the contraction constraint $\|r_k^{\mathrm{P}}\| = \mu$. To this end, let $v_k = p_k + u_k$, where $p_k \in \spn(W_{k-1})$ and $u_k \perp \spn(W_{k-1})$. We seek $w_k$ in the form $$ w_k := p_k + y_k,$$ where $y_k \perp \spn(W_{k-1})$ is a vector to be determined.

    The inner preconditioner residual is then $r_k^{\mathrm{P}} = u_k - y_k$. By construction, $r_k^{\mathrm{P}} \perp \spn(W_{k-1})$. The conditions $r_k^{\mathrm{P}} \perp w_k$ and $\|r_k^{\mathrm{P}}\| = \mu$ become
    $$ \langle u_k - y_k, p_k + y_k \rangle = 0 \quad \text{and} \quad \|u_k - y_k\|^2 = \mu^2.$$
    Noting that cross-terms vanish because $p_k \in \spn(W_{k-1})$ and $u_k, y_k \perp \spn(W_{k-1})$, the orthogonality condition simplifies to $\langle u_k, y_k \rangle = \|y_k\|^2$.

    Because $k < N$, we have $\dim \spn(W_{k-1})^\perp \geq 2$. Let $e_1 = u_k / \|u_k\|$. We can pick a unit vector $e_2 \in \spn(W_{k-1})^\perp$ such that $e_2 \perp e_1$. We let $y_k$ be a real linear combination of these such that $y_k = \alpha e_1 + \beta e_2$, in which case the condition $\langle u_k, y_k \rangle = \|y_k\|^2$ becomes $$ \alpha \|u_k\| = \alpha^2 + \beta^2, $$ and the contraction constraint $\|u_k - y_k\|^2 = \mu^2$ becomes
    $$\|u_k\|^2 - 2\alpha\|u_k\| + (\alpha^2 + \beta^2) = \mu^2.$$
    Substituting in $\alpha^2 + \beta^2 = \alpha \|u_k\|$ yields
    $$ \alpha = \frac{\|u_k\|^2 - \mu^2}{\|u_k\|}. $$
    Substituting this back in to find $\beta$ yields
    $$ \beta^2 = \mu^2 \frac{\|u_k\|^2 - \mu^2}{\|u_k\|^2}, $$
    and we can simply select
    $$ \beta = \frac{\mu}{\|u_k\|} \sqrt{\|u_k\|^2 - \mu^2}. $$
    This yields a real value for $\beta$ if and only if $\mu \leq \|u_k\|$.

    Finally, we show that the condition $\mu \leq \|u_k\|$ is always satisfied under the initial assumption $\mu \leq \frac{1}{2}$. By definition, the FGMRES residual $r_{k-1}^{\mathrm{FG}}$ is the orthogonal projection of $b$ onto $\spn(W_{k-1})^\perp$. Projecting the FFOM residual equation $b = c v_k + W_{k-1} y^{\mathrm{FF}}$ onto $\spn(W_{k-1})^\perp$ yields $r_{k-1}^{\mathrm{FG}} = c(v_k - p_k) = c u_k$. Recalling that $|c| = \|r_{k-1}^{\mathrm{FF}}\|$, we obtain $\|u_k\| = \|r_{k-1}^{\mathrm{FG}}\| / \|r_{k-1}^{\mathrm{FF}}\|$.

    Invoking the induction assumption lets us write $\omega_{k-1} = \|r_{k-1}^{\mathrm{FG}}\| / \|r_{k-2}^{\mathrm{FG}}\|$ in the notation of \eqref{eq:successive_rG}. Combining this with \eqref{eq:FFOM_FGMRES} yields $\|r_{k-1}^{\mathrm{FG}}\| / \|r_{k-1}^{\mathrm{FF}}\| = \sqrt{1 - \omega_{k-1}^2}$, and hence $\|u_k\| = \sqrt{1 - \omega_{k-1}^2}$.
    With this equality, the condition $\mu \leq \|u_k\|$ becomes $\mu \leq \sqrt{1 - \omega_{k-1}^2}$, and rearranging the terms yields $\omega_k = \frac{\mu}{\sqrt{1 - \omega_{k-1}^2}} \leq 1.$ By Proposition~\ref{prop:mu_range} together with Lemma~\ref{lem:monotonic_omega}, the assumption $\mu \leq \frac{1}{2}$ guarantees that $\omega_k < 1$. As such, the vector $y_k$ guarantees that $r_k^{\mathrm{P}} = v_k - w_k \perp \spn(W_k)$ and $\|r_k^{\mathrm{P}}\| = \mu$, and hence $\tilde r = c r_k^{\mathrm{P}}$ coincides with the FGMRES residual $r_k^{\mathrm{FG}}$, meaning that $\|r_k^{\mathrm{FG}}\| = \|r_{k-1}^{\mathrm{FF}}\|\cdot \|r_k^{\mathrm{P}}\|.$ To conclude the proof, we define the preconditioner $M_k$ to be such that $z_k = M_k^{-1} v_k = A^{-1} w_k$.
\end{proof}

The next theorem considers the more specific case when the preconditioner is GMRES($k$), guaranteeing the existence of a matrix $A$ for which the bound is attained. The proof is constructive and will be used in section~\ref{sec:numex} to illustrate the result numerically.

\begin{theorem} [Sharpness for FGMRES-GMRES($k$)] \label{thm:sharpness_gmres}
    Let $0 < \mu \leq \frac{1}{2}$, and let $m,k \in \Z^+$ such that $mk \in \Z_{<N}$. Then, for any right-hand side vector $b \in \C^N \setminus \{0\}$, there exists an invertible matrix $A \in \C^{N \times N}$ such that, for all $1 \leq j \leq m$, the FGMRES-GMRES($k$) algorithm satisfies $\|r_j^{\mathrm{P}}\| \leq \mu$ and $$\big\| r_j^{\mathrm{FG}} \big\| = \big\| r_{j-1}^{\mathrm{FF}} \big\| \cdot \big\| r_{j}^{\mathrm{P}} \big\|.$$ 
    That is, the bound \eqref{eq:FGMRESFFOMbound} holds as an equality. For this choice of linear system, the bound in \eqref{eq:successive_rG} and hence also in Theorem \ref{thm:explicit_bound} hold as equalities for FGMRES-GMRES($k$). Consequently, the bound in Theorem \ref{thm:explicit_bound} is sharp.
\end{theorem}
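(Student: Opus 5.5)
We will reuse the construction in the proof of Theorem~\ref{thm:sharpness}, but now \emph{build the matrix $A$} so that GMRES($k$), started from $v_j$, returns exactly the vector $z_j$ that the worst-case construction demands. The plan is to define $A$ by its action on a sequence of orthonormal directions, column block by column block, in $mk+1\le N$ dimensions. Any remaining orthogonal complement is filled with the identity, so that $A$ is invertible.

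\textbf{Step 1: reduce GMRES($k$) to a prescribed output.} Run GMRES($k$) on $Az=v_j$ with zero initial guess. Its iterate lies in $\mathcal{K}_k(A,v_j)$, and its residual is orthogonal to $A\mathcal{K}_k(A,v_j)$. We arrange that $v_j, Av_j,\dots,A^{k-2}v_j$ are mapped into fresh orthonormal directions $q_{j,1},\dots,q_{j,k-1}$. These directions are orthogonal to everything used at earlier outer steps and to the future directions. The last application $A^{k-1}v_j$ (more precisely $A q_{j,k-1}$) is then chosen so that the GMRES($k$) minimizer yields $w_j=Az_j = p_j+y_j$. Here $p_j$ and $y_j$ are exactly as in the proof of Theorem~\ref{thm:sharpness}, so that $r_j^{\mathrm{P}}=u_j-y_j$, $\|r_j^{\mathrm{P}}\|=\mu$, and $r_j^{\mathrm{P}}\perp \spn(W_j)$. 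The cleanest way is to make $A\mathcal{K}_k(A,v_j)$ contain $w_j$ while the residual $v_j-w_j$ is orthogonal to $A\mathcal{K}_k(A,v_j)$. It suffices to have $A\mathcal{K}_k = \spn(w_j) \oplus S_j$ with $S_j\perp v_j,w_j$, since then $w_j$ is the orthogonal projection of $v_j$ onto $A\mathcal{K}_k$.

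\textbf{Step 2: consistency across outer steps.} The difficulty is that $A$ is one fixed matrix. Its action on $v_j$ (which depends on earlier steps) must not conflict with its action already prescribed on earlier vectors. We will show inductively that $v_j$ has a nonzero component outside the span of all vectors on which $A$ has been defined so far. This holds because $v_j$ is built from $Az_{j-1}$ orthogonalized against $V_{j-1}$, and $y_{j-1}$ contains the new direction $e_2$, which we choose to be a brand new basis vector. We therefore define $A$ on that new component freely, and Step~1's requirements become linear conditions on fresh directions. The total count of directions used is at most $mk+1\le N$, using $mk<N$. This matches the need for the auxiliary vector $e_2$ at each outer step, as in Theorem~\ref{thm:sharpness}.

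\textbf{Step 3: conclude.} With $w_j$ realized at every step, the argument of Theorem~\ref{thm:sharpness} applies verbatim. It requires $\mu\le\|u_j\|=\sqrt{1-\omega_{j-1}^2}$, which is guaranteed by $\mu\le 1/2$ via Proposition~\ref{prop:mu_range} and Lemma~\ref{lem:monotonic_omega}. It then gives $\|r_j^{\mathrm{FG}}\|=\|r_{j-1}^{\mathrm{FF}}\|\,\|r_j^{\mathrm{P}}\|$, so \eqref{eq:successive_rG} and Theorem~\ref{thm:explicit_bound} hold with equality.

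\textbf{Main obstacle.} We expect Step~2 to be the hardest part: making sure the GMRES($k$) Krylov spaces for different $j$ interact only as intended, and checking that the final $A$ is invertible. Our first attempt would be an explicit orthonormal basis $\{b/\|b\|, q_{1,1},\dots,q_{m,k}\}$ in which $A$ is block lower Hessenberg with nonzero subdiagonal. Invertibility would then follow by choosing the last-column entries generically, or by an explicit determinant computation.
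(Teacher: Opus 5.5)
Your proposal is correct and is essentially the paper's construction. Your chains $v_j\mapsto q_{j,1}\mapsto\cdots\mapsto q_{j,k-1}\mapsto w_j$, with every $q_{j,i}$ orthogonal to $v_1,\dots,v_{m+1}$, are the paper's $d_{j,i}$, so $A\mathcal{K}_k(A,v_j)=\spn\{q_{j,1},\dots,q_{j,k-1}\}\oplus\spn\{w_j\}$ and GMRES($k$) returns $Az_j=w_j$. Your Step~2 worry disappears once you note, as the paper implicitly does by building all $w_j$ and the Gram--Schmidt basis $v_1,\dots,v_{m+1}$ first, that $v_{j+1}$ is exactly the fresh direction $e_2$ chosen at step $j$. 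For invertibility, your ``generic last column'' option is the cleaner one. The value $Av_{m+1}$ is never used by the algorithm, and the first $mk$ images are independent because $w_j$ has the nonzero component $\beta e_2$ along $v_{j+1}$ (with $\beta>0$ as in Theorem~\ref{thm:sharpness}). The identity extension that you and the paper both mention also works, but it additionally requires $v_{m+1}\notin\spn(W_m)$, which holds because $\|r_m^{\mathrm{FG}}\|=\omega_m\|r_{m-1}^{\mathrm{FG}}\|$ with $\omega_m<1$ rules out stagnation at step $m$.
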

\begin{proof}
    We define the vectors $w_1, \dots, w_m$ as in the proof of Theorem \ref{thm:sharpness}.
    We show that we can construct a matrix $A$ such that $A z_j = w_j$ holds for all $j \leq m$ in the FGMRES-GMRES($k$) algorithm. The vectors $w_j$ are constructed such that this proves the result, as per the proof of Theorem \ref{thm:sharpness}.

    We define the vectors $v_1, \dots, v_{m+1}$ as the orthonormal basis for $\spn [b \ w_1 \ \dots \ w_m]$ coming from the Gram–Schmidt process. For each run of the preconditioner GMRES($k$) (outer iteration $j \in \{1, \dots, m\}$), we define $k-1$ unit vectors $d_{j,1}, d_{j,2}, \dots, d_{j,k-1}$ such that all $d_{j,i}$ are orthogonal to each other and orthogonal to  $v_1, \dots, v_{m+1}$.

    To construct the operator $A$, we let $A = Y X^*$, where
    \begin{align*}
        X & = [v_1 \ \dots \ v_{m} \ d_{1,1} \ \dots \ d_{1,k-2} \ d_{1,k-1} \ d_{2,1} \ \dots \ d_{2,k-2} \ d_{2,k-1} \ \dots \ d_{m,1} \ \dots \ d_{m,k-2} \ d_{m,k-1} ] \\
        Y & = [d_{1,1} \ \dots \ d_{m,1} \ d_{1,2} \ \dots \ d_{1,k-1} \ w_{1} \ d_{2,2} \ \dots \ d_{2,k-1} \ w_{2} \ \dots \ d_{m,2} \ \dots \ d_{m,k-1} \ w_{m}]
    \end{align*}
    Since $A$ maps a column of $X$ to the corresponding column of $Y$, defining $A$ in this way is equivalent to defining it via its action on the vectors:
    \begin{align*}
        A v_j       & := d_{j,1},                                          \\
        A d_{j,i}   & := d_{j+1,i} \quad \text{(for $i = 1, \dots, k-2$)}, \\
        A d_{j,k-1} & := w_j.
    \end{align*}
    With this definition of $A$, the Krylov subspace can be expressed as
    $$ \mathcal{K}_k(A, v_j) = \spn\{v_j, d_{j,1}, d_{j,2}, \dots, d_{j,k-1}\}, $$
    and its image under $A$ is
    $$ A \mathcal{K}_k(A, v_j) = \spn\{d_{j,1}, d_{j,2}, \dots, d_{j,k-1}, w_j\}. $$
    At iteration $j$, the vectors $d_{j,i}$ are orthogonal to $v_j$ and $w_j$ by construction. As such, the GMRES($k$) solution $z_j$ satisfies $Az_j \parallel w_j$. From the proof of Theorem \ref{thm:sharpness}, $w_j$ is constructed such that $(v_j - w_j) \perp w_j$. Thus, $Az_j = w_j$, which guarantees that $\|r_j^{\mathrm{P}}\| = \mu$ and $$\big\| r_j^{\mathrm{FG}} \big\| = \big\| r_{j-1}^{\mathrm{FF}} \big\| \cdot \big\| r_{j}^{\mathrm{P}} \big\|,$$ as desired.



    To complete the matrix $A$ into an invertible matrix,
    we can extend $A$ to act, for example, as the identity on the remaining $N - mk$ dimensions (by computing an orthonormal basis to $\ker(X^*)$ and appending it to the columns of $X$ and $Y$). 
\end{proof}

We conjecture that it is possible to prove sharpness for an arbitrary sequence of matrix preconditioners. The precise statement is as follows.

\begin{conjecture}[Sharpness for matrix preconditioners]
    Let $0 < \mu \leq \frac{1}{2}$. For any right-hand side vector $b \in \C^N \setminus \{0\}$ and any choice of matrix preconditioners $(M_j)_{1\leq j < N}$, there exists an invertible matrix $A \in \C^{N \times N}$ such that, for all $1 \leq j < N$, it holds that $\|r_j^{\mathrm{P}}\| \leq \mu$ and the bound in Theorem~\ref{thm:explicit_bound} holds as an equality.
\end{conjecture}

The main technical difficulty in proving this conjecture lies in verifying the linear independence of the vectors $z_j$, which is relied upon by the proof techniques used in the proofs of Theorem~\ref{thm:sharpness} and Theorem~\ref{thm:sharpness_gmres}. For this reason, we leave this statement as an open conjecture.

\section{Numerical demonstrations}\label{sec:numex}

We now provide some numerical illustrations of the presented theory. All experiments are performed in MATLAB R2026A and the matrices and codes for reproducing all three experiments can be found on
\smallskip
\begin{center}
\url{https://github.com/NymanLauri/convergence-of-FGMRES}
\end{center}

\subsection{Illustrating sharpness for $\mu\leq 1/2$}
We first verify the sharpness of the bound derived in Theorem \ref{thm:sharpness_gmres}. We construct a test matrix $A$ of size $N=2,500$ and right-hand side vector~$b$ according to the proof of Theorem~\ref{thm:sharpness_gmres}, such that the contraction factor of the preconditioner (GMRES(100)) is $\mu = 0.5$ at each iteration. This linear system is saved in the file \texttt{sharp\_system.mat} of the GitHub repository. We run FGMRES for $m=20$ outer iterations.
Figure~\ref{fig:fgmres_sharp} shows that the computed FGMRES residual indeed matches its upper bound, confirming that this theoretical bound is sharp and cannot be improved in the general case.

\begin{figure}[htbp]
    \centering
    \begin{tikzpicture}
        \begin{semilogyaxis}[
                xlabel={Iterations},
                legend pos=south west,
                legend columns=1,
                width=0.7\linewidth,
                height=7cm,
                font=\small
            ]

            \addplot[black, only marks, thick, mark=x, mark size=2pt, mark options={fill=black, solid}]
            table[x index=0, y index=1] {fgmres_sharp.dat};
            \addlegendentry{$\mathrm{FGMRES}$ residual};

            \addplot[orange, dashed, thick]
            table[x index=0, y index=2] {fgmres_sharp.dat};
            \addlegendentry{$\mathrm{FGMRES}$ bound \eqref{eq:explicit_bound}};

        \end{semilogyaxis}
    \end{tikzpicture}
    \caption{Sharpness of FGMRES convergence bound. The matrix $A \in \mathbb{C}^{2500 \times 2500}$ was constructed such that the FGMRES-GMRES(100) bound holds as an exact equality for the right-hand side vector $b=e_1$, with an inner contraction factor $\mu=0.5$. The outer loop was run for 20 iterations. The plot clearly shows the two phases of FGMRES convergence, initially at geometric rate~$\mu$, and gradually deteriorating to~$\nu = \sin(\frac{1}{2} \arcsin(2\mu)) = \sqrt{\mu}$.}
    \label{fig:fgmres_sharp}
\end{figure}
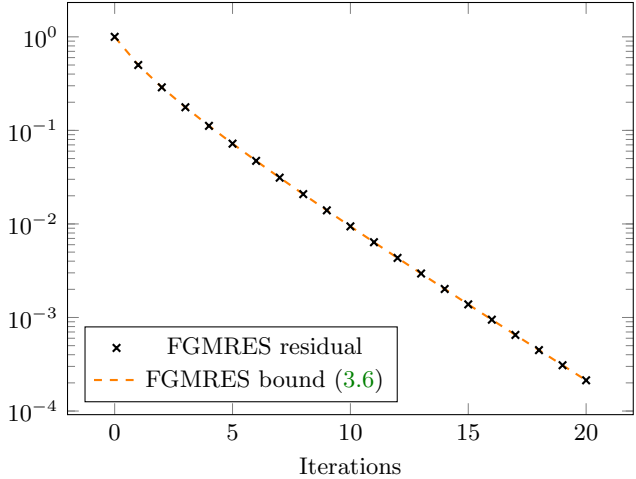

\subsection{Illustrating stagnation for $\mu>1/2$}

We now provide an example for which FGMRES stagnates. This is illustrated in Figure \ref{fig:fgmres_stalling} where the residual norm stagnates after six iterations for $\mu=0.55$. The relative residual at stagnation is roughly $0.2$. The corresponding linear system is saved in the file \texttt{stagnating\_system.mat} of the GitHub repository.

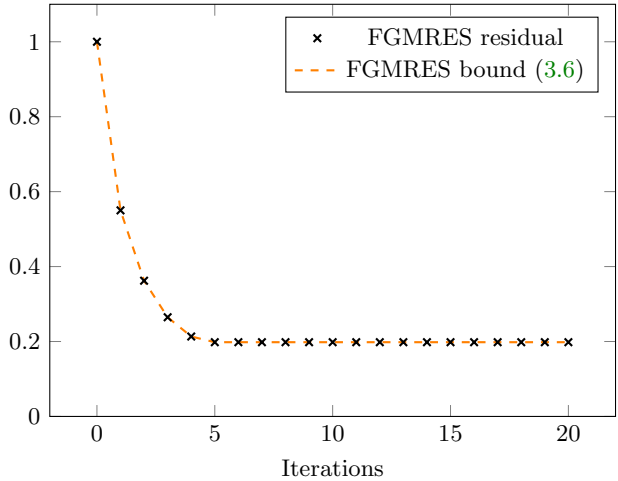
\begin{figure}[htbp]
    \centering
    \begin{tikzpicture}
        \begin{axis}[
                xlabel={Iterations},
                ymin=0,
                legend pos=north east,
                legend columns=1,
                width=0.7\linewidth,
                height=7cm,
                font=\small
            ]

            \addplot[black, only marks, thick, mark=x, mark size=2pt, mark options={fill=black, solid}]
            table[x index=0, y index=1] {fgmres_stalling_sharp.dat};
            \addlegendentry{$\mathrm{FGMRES}$ residual};

            \addplot[orange, dashed, thick]
            table[x index=0, y index=2] {fgmres_stalling_sharp.dat};
            \addlegendentry{$\mathrm{FGMRES}$ bound \eqref{eq:explicit_bound}};

        \end{axis}
    \end{tikzpicture}
    \caption{Stagnation of FGMRES for $\mu=0.55$. The matrix $A \in \mathbb{C}^{2500 \times 2500}$ was constructed such that the FGMRES-GMRES(100) bound holds as an exact equality for the right-hand side $b=e_1$, with an inner contraction factor $\mu=0.55$. The outer loop was run for 20 iterations. The plot shows that the residual norm stagnates at iteration~6.}
    \label{fig:fgmres_stalling}
\end{figure}

\subsection{Illustration on a finite element matrix}

Our third test in Figure~\ref{fig:fgmres_conv} illustrates the bound for a matrix coming from the finite element discretization of a 3D Poisson problem (\url{https://sparse.tamu.edu/FEMLAB/poisson3Db}).
The matrix $A$ is real, nonsymmetric, and sparse with $N=85,623$ and $2,374,949$ nonzeros. The preconditioner is GMRES with a target residual reduction of $\mu=0.1$. The maximal number of GMRES iterations across the nine outer FGMRES iterations is 68 (the individual iteration counts are 6, 27, 66, 68, 56, 60, 45, 54, 51). We find that our new bound~\eqref{eq:cheb_bound} still follows the actual convergence closely.

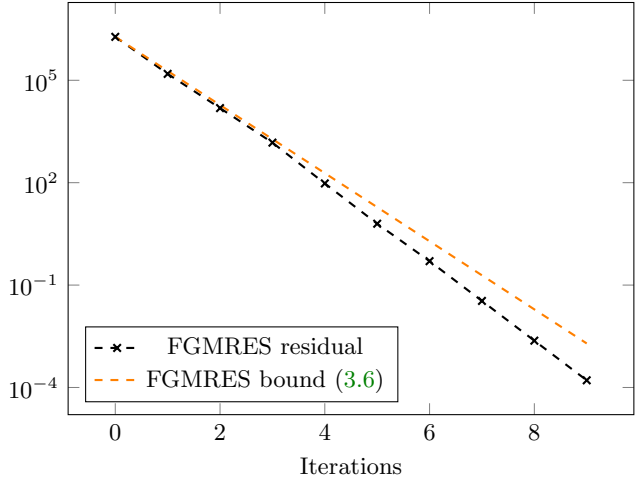
\begin{figure}[htbp]
    \centering
    \begin{tikzpicture}
        \begin{semilogyaxis}[
                xlabel={Iterations},
                legend pos=south west,
                legend columns=1,
                width=0.7\linewidth,
                height=7cm,
                font=\small
            ]

            \addplot[black, dashed, thick, mark=x, mark size=2pt, mark options={fill=black, solid}]
            table[x index=0, y index=1] {fgmres_results.dat};
            \addlegendentry{$\mathrm{FGMRES}$ residual};

            \addplot[orange, dashed, thick]
            table[x index=0, y index=2] {fgmres_results.dat};
            \addlegendentry{$\mathrm{FGMRES}$ bound \eqref{eq:explicit_bound}};

        \end{semilogyaxis}
    \end{tikzpicture}
    \caption{Convergence of FGMRES-GMRES on a 3D Poisson matrix. There is good agreement between the computed residuals and the  upper bound given in~\eqref{eq:cheb_bound}.}
    \label{fig:fgmres_conv}
\end{figure}


    

\section{Conclusions}\label{sec:concl}
We have derived an attainable upper bound on the FGMRES residual. This bound shows that FGMRES convergence is governed by two phases, an initial phase of geometric convergence at rate~$\mu$, followed by a phase where convergence deteriorates to $\nu = \sin(\frac{1}{2} \arcsin(2\mu))$ or stagnates. 

One might wonder what are the practical consequences of such a finding? First, our results show that when a sufficiently good preconditioner with $\mu\ll 1/2$ is available, it is safe to assume that FGMRES will convergence roughly at rate~$\mu$ throughout. See also Table~\ref{tab:mu}. If the preconditioner is a tunable method where a target $\mu$ can be chosen freely   (e.g., with GMRES without imposing an upper limit on the number of iterations), and if it is known roughly how the cost of the preconditioner increases as a function of~$\mu$, then our new bound will pave the way for a more informed, less heuristical, choice of the tolerances. 

Another consequence of our work is that FGMRES is guaranteed to converge to the exact solution of $Ax=b$ if and only if $\mu\leq 1/2$ (assuming no other knowledge about the problem). We believe this is the first such result. It is interesting to compare this to restarted GMRES. If GMRES is restarted with a fixed target relative residual decrease of $\mu$ per cycle, then the overall residual decreases like $O(\mu^j)$. This is better than the worst case convergence of FGMRES given by our bound (which is sharp). On the other hand, it has been observed repeatedly (e.g., in~\cite{saad1993flexible,simoncini2002flexible,guttel2025stabilizing}), that FGMRES-GMRES($k$) typically outperforms restarted GMRES($k$). This may be partially explained by the fact that each right-hand side vector $v_j$ handed to the \mbox{FGMRES}  preconditioner $M_j$ is effectively deflated against the previous vectors $v_1,\ldots,v_{j-1}$; see~\cite{simoncini2002flexible}. Hence it may be more adequate to compare FGMRES-GMRES($k$) against GMRES-DR($k,j-1$), that is, GMRES with deflated restarting~\cite{Morgan2002} using $j-1$ deflation vectors at the $j$-th cycle. It appears that spectrally unaware convergence bounds cannot explain these deflation effects, and further research is needed here.

Our key assumption throughout the paper was~\eqref{eq:assumemu}. There are situations where one has limited control over the residual decrease $\mu$ of the preconditioner. An example is the fastGMRES method in \cite{guttel2025stabilizing}. Therein, the preconditioner is a version of GMRES utilizing a nonorthogonal basis and randomized sketching, and the maximal number of iterations (and hence the residual norm) is limited by the exponentially growing condition number of the Krylov basis. Our global convergence results do not apply in such a situation, or they may be too pessimistic if $\mu$ is chosen as an upper bound for all FGMRES cycles. However, most of the recursive results in section~\ref{sec:recur} are still valid. We hope to extend our analysis to a setting that can accommodate for a small number of FGMRES cycles with large~$\mu$ (even $\geq 1$) while still predicting convergence.

\section*{Acknowledgments}
Both authors acknowledge funding from the UK's Engineering and Physical Sciences Research Council (EPSRC grant EP/Z533786/1). SG~is supported by Royal Society Industry Fellowship IF/R1/231032.  We are grateful to Karl Meerbergen and John Pearson for useful discussions.

\bibliographystyle{plain}
\bibliography{refs}

\end{document}